\documentclass[11pt]{article}

\usepackage{amsmath, amscd, amssymb,amsfonts,amsthm,amscd,graphicx,psfrag,epsfig}
\usepackage{array, float, mathrsfs, mathtools, stmaryrd, youngtab}
\usepackage[all]{xy}

 \usepackage{geometry}               
 \geometry{letterpaper}            
 
\usepackage[
colorlinks=true,%
linkcolor=blue,%
citecolor=blue,%
filecolor=blue,%
menucolor=blue,%
urlcolor=blue]{hyperref}

\newtheorem{theorem}{Theorem}[section]

\newtheorem{lemma}[theorem]{Lemma}
\newtheorem{proposition}[theorem]{Proposition}
\newtheorem{corollary}[theorem]{Corollary}

\newtheorem{conjecture}[theorem]{Conjecture}
\newtheorem{theorem-definition}[theorem]{Theorem-Definition}
\newtheorem{theorem-construction}[theorem]{Theorem-Construction}
\newtheorem{lemma-definition}[theorem]{Lemma--Definition}
\newtheorem{lemma-construction}[theorem]{Lemma--construction}
\newtheorem{definition}[theorem]{Definition}

\theoremstyle{definition}

\newtheorem{remark}[theorem]{Remark}
\newtheorem{example}[theorem]{Example}

\newtheorem*{acknowledgement}{Acknowledgements}

\newcommand{\bg}{\begin{equation}\begin{gathered}}
\newcommand{\eg}{\end{gathered}\end{equation}}

\newcommand{\n}{\mathfrak{n}}

\newcommand{\C}{\mathbb{C}}

\newcommand{\Z}{\mathbb{Z}}

\newcommand{\hlra}{\lhook\joinrel\longrightarrow}

\newcommand{\old}[1]{}

\newcommand{\B}{{\rm B}}
\newcommand{\G}{{\rm G}}
\newcommand{\U}{{\rm U}}
\renewcommand{\H}{{\rm H}}

\newcommand{\bS}{{\mathbb{S}}}
\newcommand{\lms}{\longmapsto}
\newcommand{\lra}{\longrightarrow}

\newcommand{\be}{\begin{equation}}
\newcommand{\ee}{\end{equation}}
\newcommand{\bt}{\begin{theorem}}
\newcommand{\et}{\end{theorem}}
\newcommand{\bd}{\begin{definition}}
\newcommand{\ed}{\end{definition}}
\newcommand{\bp}{\begin{proposition}}
\newcommand{\ep}{\end{proposition}}

\newcommand{\bl}{\begin{lemma}}
\newcommand{\el}{\end{lemma}}
\newcommand{\bc}{\begin{corollary}}
\newcommand{\ec}{\end{corollary}}
\newcommand{\bcon}{\begin{conjecture}}
\newcommand{\econ}{\end{conjecture}}
\newcommand{\la}{\label}

\usepackage{tikz}
\usepackage{tikz-cd}
\usetikzlibrary{arrows,chains,shapes,matrix,positioning,scopes} 
\usetikzlibrary{decorations.markings}
\tikzstyle arrowstyle=[scale=1.1]
\tikzstyle directed=[postaction={decorate,decoration={markings,
    mark=at position 1 with {\arrow[arrowstyle]{latex}}}}]
\tikzstyle reverse directed=[postaction={decorate,decoration={markings,
    mark=at position .45 with {\arrowreversed[arrowstyle]{latex};}}}]

\title{Duals of semisimple Poisson-Lie groups and cluster theory of moduli spaces of {$\G$}-local systems}
\author{Linhui Shen}
\date{}

\begin{document}

\maketitle

\begin{abstract}
We study the dual $\G^\ast$ of a standard semisimple  Poisson-Lie group $\G$ from a perspective of cluster theory. We show that the coordinate ring $\mathcal{O}(\G^\ast)$ can be naturally embedded into  a quotient algebra of a cluster Poisson algebra with a Weyl group action. The coordinate ring $\mathcal{O}(\G^\ast)$ admits a natural basis, which has positive integer structure coefficients and satisfies an invariance property under a braid group action. We continue the study of the moduli space $\mathscr{P}_{\G,\bS}$ of $\G$-local systems introduced in \cite{GS3} and prove that the coordinate ring of $\mathscr{P}_{\G, \bS}$ coincides with its underlying cluster Poisson algebra. 
\end{abstract}

\tableofcontents

\section{Introduction}
Let $\G$ be a connected and adjoint complex semisimple Lie group equipped with the standard Poisson-Lie structure. Denote by $\G^\ast$ the  {\it dual Poisson-Lie} group of $\G$. Our main goal is to study the Poisson-Lie structure of $\G^\ast$ in the context of cluster theory. 

The coordinate ring $\mathcal{O}(\G^\ast)$ of $\G^\ast$ is a Poisson algebra, whose quantization  gives rise to the quantum group $\mathcal{U}_q(\mathfrak{g})$. In general, the Poisson algebra $\mathcal{O}(\G^\ast)$ may not carry a compatible cluster structure. To resolve this issue, one approach is to extend the family of cluster algebras. For example, Gekhtman, Shapiro, and Vainstein showed in \cite{GSVgc} that the dual ${\rm GL}_n^\ast$ carries a compatible  {\it generalized cluster structure}. In this paper, we propose a different approach by embedding the Poisson algebra $\mathcal{O}(\G^\ast)$ into a quotient algebra of a cluster Poisson algebra together with a Weyl group action.  

Our main tool is the  moduli spaces of $\G$-local systems studied in \cite{GS3}.  Let $\bS$ be an oriented surface with punctures and marked boundary points. The moduli space $\mathscr{P}_{\G, \bS}$ parametrizes $\G$-local systems over $\bS$ together with some boundary data (Definition \ref{sdnion}). Theorem 1.12 of \cite{GS3} asserts that the space $\mathscr{P}_{\G, \bS}$ carries a natural cluster Poisson structure, which gives rise to a cluster Poisson algebra $\mathcal{O}_{\rm cl}(\mathscr{P}_{\G, \bS})$ as defined in \eqref{sdafionio}. 

We prove the following stronger result in Section \ref{sec.3.3}.
\bt 
\label{cdsn.cluster.poisson}
The cluster Poisson algebra $\mathcal{O}_{\rm cl}(\mathscr{P}_{\G, \bS})$ coincides with the ring $\mathcal{O}(\mathscr{P}_{\G, \bS})$ of regular functions on $\mathscr{P}_{\G, \bS}$.
\et

Of particular interest to us is the case when $\bS=\odot$ is a once-punctured disk with two marked points on its boundary. Our starting point is Theorem 2.25 in \cite{GS3}, which asserts that the dual group $\G^\ast$ is identified with the moduli space $\mathscr{L}_{\G, \odot}$. 
As established in \cite{GS3}, the Weyl group $W$ acts on $\mathscr{P}_{\G, \odot}$ as {\it cluster Poisson transformations}. The subalgebra $\mathcal{O}(\mathscr{P}_{\G, \odot})^W$ of $W$-invariant regular functions on $\mathscr{P}_{\G, \odot}$ inherits a Poisson structure from $\mathcal{O}(\mathscr{P}_{\G, \odot})$. 
 The \emph{outer monodromies} $\mu_1, \ldots, \mu_r$ are  Poisson central elements in $\mathcal{O}(\mathscr{P}_{\G, \odot})^W$ that correspond to the simple positive roots of $\G$. Let $\mathcal{I}$ be the ideal generated by $\mu_i-1$ for $i=1,\ldots, r$. 
One of our main results is as follows.

\bt \label{main.result.1}
The following algebras are isomorphic as Poisson algebras:
\be
\label{isomorphism}
\mathcal{O}(\G^\ast)\stackrel{\sim}{=} \mathcal{O}(\mathscr{P}_{\G, \odot})^W {\Big \slash} \mathcal{I}. 
\ee
 \et
Note that the isomorphism \eqref{isomorphism} does not automatically endow the algebra $\mathcal{O}(\G^\ast)$ with a cluster algebra structure. 
We emphasize that  \eqref{isomorphism} preserves  the Poisson structures on both algebras. The isomorphism \eqref{isomorphism} is the semiclassical limit of a canonical embedding
\be
\label{asncijosn}
\mathcal{U}_q(\mathfrak{g}) \longrightarrow \mathcal{O}_q(\mathscr{P}_{\G, \odot})^W {\Big \slash} \mathcal{I},
\ee
where $\mathcal{O}_q(\mathscr{P}_{\G, \odot})$ is a quantization of the cluster Poisson algebra $\mathcal{O}(\mathscr{P}_{\G, \odot})$ following Fock and Goncharov \cite{FGrep}. The embedding \eqref{asncijosn} is established in \cite[Theorem 2.22]{GS3}, which generalizes the cluster realization of $\mathcal{U}_q(\mathfrak{sl}_n)$ by Schrader and Shapiro \cite{SS}. Another closely related structure is the Poisson geometry of the Grothendieck-Springer resolution $\widetilde{\G}$ studied in  \cite{EL}. Inspired by the results of \cite{EL},  Brahami \cite{B} and Schrader-Shapiro \cite{SS1} study the cluster Poisson structures related to $\widetilde{\G}$ and $\G^*$. Both \cite{B} and \cite{SS1} crucially use the cluster Poisson structure on the biggest double Bruhat cell inside ${\rm G}$. Their connections to our Theorem \ref{main.result.1} is an interesting direction for future research. 

Theorem \ref{main.result.1} allows us to apply cluster theory to the study of $\mathcal{O}(\G^\ast)$.  Lusztig constructed an action of the  braid group $\mathbb{B}_\G$ on the quantum group $\mathcal{U}_q(\mathfrak{g})$, e.g., see \cite[Part VI]{Lus}. After the specialization $q=1$, the Braid group acts on $\G^\ast$ as Poisson automorphisms.   In Section \ref{braid.group.action.4}, we present an explicit description of this action when $\G^*$ is realized as a subgroup of $\B_+\times \B_-$. We apply Gross, Hacking, Keel, and Kontsevich's construction of theta bases of cluster algebras \cite{GHKK} in this situation and obtain a natural basis for  $\mathcal{O}(\G^\ast)$. 

\bt 
\la{main.result.2}
The algebra $\mathcal{O}(\G^\ast)$ admits a natural linear basis $\overline{\Theta}$ with positive integer structure coefficients. The basis $\overline{\Theta}$, as a set, is invariant under the braid group action. 
\et

We expect that Theorems \ref{main.result.1} $\&$  \ref{main.result.2}  can be generalized to the quantum setting. In particular, we conjecture that the quantum group $\mathcal{U}_q(\mathfrak{g})$ admits a natural basis invariant under the braid group action, which can be viewed as an extension of Lusztig's dual canonical basis for $\mathcal{U}_q(\mathfrak{n})$. We verify this conjecture for $\mathfrak{g}=\mathfrak{sl}_2$ via a straightforward calculation (Example \ref{acsdnjoqde}).

\begin{acknowledgement}
I wish to thank Alexander Goncharov for useful discussions that prompted me to finish the paper. I am grateful to the referees for very careful reading of this paper and for many useful suggestions. I was supported by the Collaboration Grants for Mathematicians from the Simons Foundation, $\#$ 711926.
\end{acknowledgement}

\section{Preliminaries}
\subsection{Cluster Poisson Algebras}
A cluster ensemble, introduced by Fock and Goncharov in \cite{FGensem},  is a pair $(\mathscr{X}, \mathscr{A})$ of positive spaces associated with the same class of combinatorial data called seeds. The coordinate ring of $\mathscr{A}$ coincides with the upper cluster algebra of Berenstein, Fomin, and Zelevinsky \cite{BFZ}. We  focus on the coordinate ring of $\mathscr{X}$, which carries a natural Poisson structure and is called the cluster Poisson algebra in this paper. It is worth mentioning that the cluster Poisson algebras are not the same as the upper cluster algebras. In this section, we rapidly recall the definition of cluster Poisson algebras and their basic properties. 

Denote by $\mathcal{F}$ the field of rational functions in  independent variables $x_1, \ldots, x_n$ with complex coefficients. Let $\widehat{\varepsilon}=(\widehat{\varepsilon}_{ij})$ be an $n\times n$ rational skew-symmetric matrix.
It gives rise to a Poisson bracket on $\mathcal{F}$ such that 
\be
\label{vfno}
\{x_i, x_j\}= 2 \widehat{\varepsilon}_{ij}x_ix_j.
\ee
Fix a positive integer $m\leq n$. The {\it multipliers}  $d_1, \ldots, d_m$  are positive integers such that $\varepsilon_{ik}:=\widehat{\varepsilon}_{ik}d_k\in \mathbb{Z}$ for $1\leq i\leq n$ and $1\leq k \leq m$. The  matrix $\varepsilon=(\varepsilon_{ik})$ is called an {\it exchange matrix}. The set ${\bf x}=\{x_1, \ldots, x_n\}$ is called a {\it cluster Poisson chart}. 
The pair $\Sigma=({\bf x}, \varepsilon)$  is called a {\it seed} in $\mathcal{F}$.

For $1\leq k\leq m$, we define the adjacent cluster Poisson chart ${\bf x}_k=\{x_1', \ldots, x_n'\}$ by
\[
x_i'=\left\{\begin{array}{ll}
    x_{k}^{-1} & \mbox{if } i=k; \\ x_i\left(1+x_k^{-{\rm sgn}(\varepsilon_{ik})}\right)^{-\varepsilon_{ik}}
     & \mbox{if } i\neq k .
\end{array} \right.
\]
The chart ${\bf x}_k$ gives a transcendence basis of  $\mathcal{F}$. The Poisson bracket \eqref{vfno} in terms of ${\bf x}_k$ becomes
$\{x_i', x_j'\}= 2 \widehat{\varepsilon}_{ij}' x_i'x_j'$. It determines an $n\times m$ integer matrix $\varepsilon'$ whose entries
are
\[
\varepsilon_{ij}':=\widehat{\varepsilon}_{ij}'d_j= \left\{\begin{array}{ll}
-\varepsilon_{ij} & \mbox{~~if $i=k$ or $j=k$};\\
\varepsilon_{ij}+\frac{|\varepsilon_{ik}|\varepsilon_{kj}+ \varepsilon_{ik}|\varepsilon_{kj}|}{2}& \mbox{~~otherwise}.
\end{array}\right.
\]
The process of obtaining the new seed $\Sigma_k=({\bf x}_k, \varepsilon')$ from $\Sigma$ is called a {\it seed mutation} in the direction $k$. We say that a seed $\Sigma'$ is mutation equivalent to $\Sigma$ and denote by $\Sigma'\sim \Sigma$ if it can be obtained from $\Sigma$ by a sequence of seed mutations.

\bd
\label{svafonv}
Let $\mathbb{C}[{\bf x}^\pm]=\mathbb{C}[x_1^{\pm 1}, \ldots, x_n^{\pm 1}]$ denote the ring of Laurent polynomials in $x_1, \ldots, x_n$. The {\it upper bound} associated with a seed $\Sigma$ is a ring
\[
\mathcal{U}(\Sigma)=\mathbb{C}[{\bf x}^{\pm}] \cap \mathbb{C}[{\bf x}_1^{\pm}]\cap \ldots \cap \mathbb{C}[{\bf x}_m^{\pm}].
\]
The {\it cluster Poisson algebra} associated with $\Sigma$ is the intersection of  Laurent polynomial rings for all seeds $\Sigma'$ that are mutation equivalent to $\Sigma$:
  \be
   \label{adbcniadj}
  \mathcal{O}(\Sigma)=\bigcap_{\Sigma'\sim \Sigma} \mathbb{C}[{\bf x}_{\Sigma'}^{\pm}]. 
  \ee
\ed

The following crucial lemma is a direct consequence of \cite[Theorem 3.9]{GHK}.
\begin{lemma}
\label{GHK.up}
 $\mathcal{O}(\Sigma)=\mathcal{U}(\Sigma)$.
\end{lemma}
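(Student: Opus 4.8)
The inclusion $\mathcal{O}(\Sigma) \subseteq \mathcal{U}(\Sigma)$ is immediate, since $\mathcal{O}(\Sigma)$ is an intersection over the larger index set of all seeds mutation equivalent to $\Sigma$, which in particular contains $\Sigma$ together with its $m$ immediate neighbors $\Sigma_1, \ldots, \Sigma_m$. The content of the lemma is therefore the reverse inclusion $\mathcal{U}(\Sigma) \subseteq \mathcal{O}(\Sigma)$, and the plan is to prove it geometrically rather than by manipulating Laurent expansions directly. I would first recast both rings as rings of global regular functions. Writing $T_{\Sigma'} = \mathrm{Spec}\,\mathbb{C}[{\bf x}_{\Sigma'}^{\pm}]$ for the split algebraic torus attached to a seed $\Sigma'$, each seed mutation $\mu_k$ becomes a birational map $T_{\Sigma'} \dashrightarrow T_{\Sigma'_k}$ given by the formula defining ${\bf x}_k$. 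Gluing all the tori $T_{\Sigma'}$, $\Sigma' \sim \Sigma$, along these mutation maps produces the cluster Poisson variety $\mathscr{X}$ (a smooth scheme built from the seed tori), and by construction $\Gamma(\mathscr{X}, \mathcal{O}_{\mathscr{X}}) = \bigcap_{\Sigma' \sim \Sigma} \mathbb{C}[{\bf x}_{\Sigma'}^{\pm}] = \mathcal{O}(\Sigma)$.

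Next I would single out the open subscheme $U \subseteq \mathscr{X}$ obtained by gluing only the initial torus $T_\Sigma$ with its $m$ neighbors $T_{\Sigma_1}, \ldots, T_{\Sigma_m}$; by the same token $\Gamma(U, \mathcal{O}_U) = \mathbb{C}[{\bf x}^{\pm}] \cap \mathbb{C}[{\bf x}_1^{\pm}] \cap \cdots \cap \mathbb{C}[{\bf x}_m^{\pm}] = \mathcal{U}(\Sigma)$. The restriction map $\Gamma(\mathscr{X}, \mathcal{O}_{\mathscr{X}}) \to \Gamma(U, \mathcal{O}_U)$ is precisely the inclusion $\mathcal{O}(\Sigma) \hookrightarrow \mathcal{U}(\Sigma)$, so it suffices to show that this restriction is surjective, i.e.\ that every regular function on the starfish $U$ extends to all of $\mathscr{X}$.

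This is exactly where I would invoke \cite[Theorem 3.9]{GHK}. Since $\mathscr{X}$ is smooth, hence normal, the Hartogs extension principle reduces the claim to the statement that the complement $\mathscr{X} \setminus U$ has codimension at least $2$. The key geometric input, which is the substance of GHK's theorem, is that each mutation $\mu_k$ restricts to an isomorphism of tori away from the single hypersurface $\{x_k = -1\}$, so that gluing $T_{\Sigma_k}$ onto $T_\Sigma$ only adjoins a divisor's worth of new points; consequently any torus reached by two or more mutations contributes points lying over an intersection of at least two such divisors, which sits in codimension $\geq 2$. Thus $U$ already covers $\mathscr{X}$ outside a codimension-$2$ locus, the restriction map is an isomorphism, and $\mathcal{U}(\Sigma) = \mathcal{O}(\Sigma)$.

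The main obstacle is not the Hartogs step, which is formal once normality and the codimension bound are in place, but rather verifying that our seed conventions fall within the hypotheses of \cite[Theorem 3.9]{GHK}. I must match the Poisson datum $\widehat{\varepsilon}$, the multipliers $d_1, \ldots, d_m$, and the split between mutable directions $1, \ldots, m$ and frozen directions $m+1, \ldots, n$ with the fixed-lattice, skew-symmetrizable framework of GHK, and confirm that the frozen variables together with the non-integrality allowed in $\widehat{\varepsilon}$ do not disturb the codimension-$2$ count. Once this dictionary is in place, the lemma follows directly.
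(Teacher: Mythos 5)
Your proof is correct and takes essentially the same route as the paper, which simply observes that the lemma is a direct consequence of \cite[Theorem 3.9]{GHK}; you have unpacked that citation into the standard starfish/Hartogs argument (trivial inclusion one way, codimension-two covering plus normality the other way), which is exactly the content GHK supply. The only caveat, which you already flag yourself, is checking that the frozen directions and the rational entries of $\widehat{\varepsilon}$ fit GHK's hypotheses --- they do, since the Poisson datum plays no role in the Laurent-ring identity and frozen variables only enlarge each torus by a fixed factor.
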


Note that  $\mathcal{O}(\Sigma)$ is a Poisson algebra. A {\it quasi-cluster transformation} of $\mathcal{O}(\Sigma)$ is a Poisson automorphism of $\mathcal{O}(\Sigma)$ that can be obtained by a sequence of mutations followed by a seed isomorphism, e.g., see \cite[Def.A.29]{SWflag}.
Many cluster Poisson algebras contain natural linear bases that are invariant under quasi-cluster transformations. For example, borrowing ideas from mirror symmetry, Gross, Hacking, Keel, and Kontsevich \cite{GHKK} reformulated the seed mutations of cluster Poisson algebras in terms of wall-crossing structures (a.k.a.  scattering diagrams), and constructed a theta basis $\Theta$ by counting broken lines in the scattering diagram. Elements of $\Theta$ in general are formal power series. One sufficient condition to guarantee that $\Theta$ is a linear basis of $\mathcal{O}(\Sigma)$ is the existence of {\it maximal green} (or, more generally, {\it reddening}) sequences of seed mutations (\cite[Prop.0.7]{GHKK}).

\subsection{Poisson-Lie groups}
For the convenience of the reader, we recall several basic concepts about Poisson-Lie groups. See \cite[Ch.1]{ChPr} for a detailed exposition.

Let $\G$ be a Lie group. Its Lie algebra $\mathfrak{g}=Lie(\G)$ carries a Lie bracket $[,]: \mathfrak{g}\otimes\mathfrak{g}\rightarrow \mathfrak{g}$. The group $\G$ is said to be a Poisson-Lie group if it is also a Poisson manifold such that the multiplication map $\G \times \G \rightarrow \G$ is a Poisson map. The Poisson structure on $\G$ induces a Lie bracket $\delta$ on the dual space $\mathfrak{g}^\ast$ that is compatible with the bracket $[,]$. The triple $(\mathfrak{g}, [,], \delta)$ is called  a Lie bialgebra. The main theorem of Poisson-Lie theory, due to Drinfeld, asserts that the functor $F: \G \rightarrow Lie(\G)$ is an equivalence between the category of connected and simply connected Poisson-Lie groups and the category of Lie bialgebras. 
For a finite dimensional Lie bialgebra $(\mathfrak{g}, [,], \delta)$, its dual $(\mathfrak{g}^\ast, \delta^\ast, [,]^\ast)$ is  a Lie bialgebra. In the group setting, we say the Poisson-Lie groups $\G$ and $\G^\ast$ are dual to each other if their corresponding Lie bialgebras are dual to each other. 

Now assume $\G$ is a connected complex semisimple Lie group. Its Lie algebra $\mathfrak{g}$ carries a Cartan-Killing form $B(\ast,\ast)$. Let $\mathfrak{b}_+$ and $\mathfrak{b}_-$ be a pair of opposite Borel subalgebras of $\mathfrak{g}$ so that their intersection $\mathfrak{h}=\mathfrak{b}_+\cap \mathfrak{b}_-$ is a Cartan subalgebra. 
The {\it Drinfeld double} $D(\mathfrak{g}):=\mathfrak{g}\oplus \mathfrak{g}$ admits an invariant non-degenerate bilinear form 
\be
\label{wbasnjonvj}
\left\langle (x, y), ~(x',y')\right\rangle := B(x, x')- B(y,y').
\ee
Let $\mathfrak{p}_+$ be the diagonal Lie subalgebra of $D(\mathfrak{g})$. There is another Lie subalgebra
\be
\label{cwnionwv}
\mathfrak{p}_-=\left\{(x,y)\in \mathfrak{b}_+\times \mathfrak{b}_-~\middle |~ \mbox{the $\mathfrak{h}$-component of $x+y$ is 0} \right\}.
\ee
Note that $D(\mathfrak{g})=\mathfrak{p}_+ \oplus \mathfrak{p}_-$ as vector spaces. Both $\mathfrak{p}_+$ and $\mathfrak{p}_-$ are isotropic with respect to the bilinear form
\eqref{wbasnjonvj}. The datum $(D(\mathfrak{g}), \mathfrak{p}_+, \mathfrak{p}_-)$ is called a {\it Manin triple}, which determine a Lie bialgebra structure on $\mathfrak{g}\stackrel{\sim}{=}\mathfrak{p}_+$ in the following way.
The bilinear form \eqref{wbasnjonvj} induces an isomorphism $\mathfrak{p}_-\stackrel{\sim}{=}\mathfrak{p}^\ast_+$. The Lie algebra structure on $\mathfrak{p}_-$ corresponds to a compatible Lie coalgebra structure on the dual $\mathfrak{p}_+$. Putting them together, we get a Lie bialgebra structure on $\mathfrak{p}_+=\mathfrak{g}$, which further determines the {\it standard Poisson-Lie structure} on $\G$.

The Manin triple determines a Lie bialgebra structure on $\mathfrak{g}^\ast\stackrel{\sim}{=}\mathfrak{p}_-$ in a similar way.
Let $\B_+$ and $\B_-$ be the Borel subgroups of $\G$ corresponding to $\mathfrak{b}_+$ and $\mathfrak{b}_-$ respectively. The dual Poisson-Lie group $\G^\ast$ of $\G$ is a subgroup of $\B_+\times \B_-$ such that  $Lie(\G^\ast)=\mathfrak{p}_-$.

\subsection{Quantization of Poisson algebras}
Fix a positive integer $d$.
Let $\mathbb{K}=\mathbb{C}[q^{\pm 1/d}]$ be the ring of Laurent polynomials in the variable $q^{1/d}$. Let $\mathcal{A}_q$ be a possibly non-commutative $\mathbb{K}$-algebra. We assume that $\mathcal{A}_q$ is free as a $\mathbb{K}$-module and that 
\be
\label{sdcmniw0efn}
\forall f, g \in \mathcal{A}_q, \hskip 10mm [f,g]:=fg -gf \in (q^{1/d}-1)\mathcal{A}_q.
\ee
Set $\mathcal{A}= \mathcal{A}_q/(q^{1/d}-1)\mathcal{A}_q$. Denote by $f\mapsto \overline{f}$ the projection from $\mathcal{A}_q$ to $\mathcal{A}$. By \eqref{sdcmniw0efn}, $\mathcal{A}$ becomes a commutative algebra and carries a  bilinear map $\{,\}:~\mathcal{A}\times \mathcal{A}\rightarrow \mathcal{A}$ such that
\[
\left\{ \overline{f}, \overline{g}\right\} = \overline{\frac{1}{d(q^{1/d}-1)}[f,g]}, ~~~~~~\forall f, g\in \mathcal{A}_q.
\]
It is easy to check that the bracket $\{,\}$ is skew-symmetric and satisfies the Jacobi identity and the Leibniz identity. Therefore $\mathcal{A}$ is a commutative Poisson algebra. 
The algebra $\mathcal{A}$ is called the {\it semiclassical limit} of $\mathcal{A}_q$. Conversely, $\mathcal{A}_q$ is called a {\it quantization} of $\mathcal{A}$.

Recall the dual Poisson-Lie group $\G^\ast$. Its coordinate ring $\mathcal{O}(\G^\ast)$ is a Poisson algebra. Let $\mathcal{U}_q(\mathfrak{g})$ be the quantum group associated to $\mathfrak{g}$. The quantum group $\mathcal{U}_q(\mathfrak{g})$ can be realized as a free $\mathbb{K}$-algebra over a suitable rescaled PBW basis. See \cite[Sec.11.4]{GS3} for a geometric interpretation of the rescaled PBW basis.  Following \cite[Th. 7.6]{CKP}, the semiclassical limit of $\mathcal{U}_q(\mathfrak{g})$ is $\mathcal{O}(\G^\ast)$. 

\begin{example}
\label{scvdnfqpvnwqn}
Let $\mathbb{K}=\mathbb{C}[q^{\pm 1/2}]$. The quantum group $\mathcal{U}_{q}(\mathfrak{sl}_2)$ is a $\mathbb{K}$-algebra generated by ${\bf E}, {\bf F}, {\bf K}^{\pm1}$ and satisfies the relations
\[
{\bf K}{\bf E} =q^2 {\bf E}{\bf K}, \hskip 7mm {\bf K}{\bf F} =q^{-2} {\bf F}{\bf K}, \hskip 7mm {\bf E}{\bf F}-{\bf F}{\bf E}=(q-q^{-1})({\bf K}^{-1}-{\bf K}).
\]
It semiclassical limit becomes $\mathcal{O}({\rm PGL}_2^\ast)\stackrel{\sim}{=}\mathbb{C}[e, f, k, k^{-1}]$ with the Poisson bracket
\[
\{k, e\}= 2ek, \hskip 7mm \{k, f\} =-2fk, \hskip 7mm \{e,f\}=2(k^{-1}-k).
\]
\end{example}

Recall the cluster Poisson algebra $\mathcal{O}(\Sigma)$ in Definition \ref{svafonv}. Let $d$ be the double of the least common multiple of the multipliers $d_1, \ldots, d_m$ and let $\mathbb{K}=\C[q^{\pm 1/d}]$. To every seed $\Sigma'\sim \Sigma$ is associated a quantum torus algebra $\mathcal{T}(\Sigma')$ over $\mathbb{K}$, with generators ${{\rm X}_1'}^{\pm1}, \ldots, {{\rm X}_n'}^{\pm1}$, and the relations
\[
{\rm X}_i'{\rm X}_j'=q^{2\widehat{\varepsilon}_{ij}'}{\rm X}_i'{\rm X}_j'.
\]
The algebra $\mathcal{T}(\Sigma')$ is a quantization of the Laurent polynomial ring $\mathbb{C}[{\bf x}_{\Sigma'}^{\pm}]$. 
By utilizing the quantum dilogarithm, Fock and Goncharov introduced a quantization of the cluster Poisson transformations in \cite{FGrep}. Therefore we get a $\mathbb{K}$-algebra
\be
\label{evfoqscsvnj}
\mathcal{O}_q(\Sigma):=\bigcap_{\Sigma'\sim \Sigma}\mathcal{T}(\Sigma').
\ee

Note that $\mathbb{K}$ is a principal ideal domain and $\mathcal{T}(\Sigma')$ is a free $\mathbb{K}$-module. Therefore $\mathcal{O}_q(\Sigma)$ is free as a submodule of $\mathcal{T}(\Sigma')$. Consider the semiclassical limit
\[\mathcal{O}_1(\Sigma):=\mathcal{O}_q(\Sigma){\Big \slash} (q^{1/d}-1)\mathcal{O}_q(\Sigma).
\]
There is a natural injective Poisson algebra homomorphism  
\be
\label{sacfdouncdfoq}
\mathcal{O}_1(\Sigma)\hlra\mathcal{O}(\Sigma).
\ee
\begin{remark}
 We conjecture that \eqref{sacfdouncdfoq} is an isomorphism and therefore  $\mathcal{O}_q(\Sigma)$ is a quantization of the cluster Poisson algebra $\mathcal{O}(\Sigma)$. Note that both sides of \eqref{sacfdouncdfoq} consider the rings of ``universally Laurent polynomials" (see \eqref{adbcniadj} and \eqref{evfoqscsvnj}). The injectivity of \eqref{sacfdouncdfoq} follows as an easy consequence. But the surjectivity part seems to be very hard. One possible approach is first to establish a quantum analog of Lemma \ref{GHK.up}.

There is a lot of literature studying the specializations of quantum cluster algebras. For example, see \cite{GLS}. It is worth mentioning that the paper \cite{GLS} investigates the quantum cluster algebras  generated by quantum cluster variables. The surjectivity is easy in this setting. On the contrary, the injectivity becomes the main challenge. 

\end{remark}

\section{Moduli spaces of $\G$-local systems}
In this section we rapidly review the definition and elementary properties of the moduli space $\mathscr{P}_{\G, \bS}$ following \cite{GS3}. 

\subsection{Definition}
The flag variety $\mathcal{B}$ consists of Borel subgroups of $\G$. We say that a pair $\B, \B'\in \mathcal{B}$ is generic if their intersection $\H:=\B\cap \B'$ is an abelian subgroup. Let $I=\{1,\ldots, r \}$ parametrize the simple positive coroots $\alpha_1^\vee, \ldots, \alpha_r^\vee$ of $\G$. The datum $p=(\B, \B', x_i, y_i; i\in I)$ is called a \emph{pinning} over a generic pair $(\B, \B')$ if it assigns to every $i\in I$ a homomorphism $\gamma_i: {\rm SL}_2 \rightarrow \G$ such that \[
\gamma_i \begin{pmatrix} 1 & a\\
0 & 1\\\end{pmatrix} = x_i(a)\in \B, \hskip 5mm \gamma_i \begin{pmatrix} 1 & 0\\
a & 1\\\end{pmatrix} = y_i(a)\in \B', \hskip 5mm \gamma_i \begin{pmatrix} a & 0\\
0 & a^{-1}\\\end{pmatrix} = \alpha^\vee_i(a)\in \H.
\]
The space of pinnings is a left principal homogeneous $\G$-space, also known as a left $\G$-torsor. We sometimes will fix a pinning $p$ and refer it as a {\it standard pinning}. 

Let $\bS$ be an oriented topological surface with interior punctures and at least one marked point on each of its boundary circles. The marked points cut the boundary circles into disconnected intervals which are called  boundary intervals.

A $\G$-local system $\mathcal{L}$ over $\bS$ is a principal $\G$-bundle over $\bS$ with flat connections. Denote by $\mathcal{L}_{\mathcal{B}}:=\mathcal{L}\times_{\G}\mathcal{B}$ its associated $\mathcal{B}$-bundle.

\bd 
\label{sdnion}
The  space $\mathscr{P}_{\G, \bS}$ parametrizes $\G$-orbits of the data $(\mathcal{L}, \{\B_c\}, \{\B_i\}, \{p_e\})$, where
\begin{itemize}
    \item $\mathcal{L}$ is a $\G$-local system over $\bS$;
    \item for every puncture $c$, the data $\B_c$ is a flat section of $\mathcal{L}_{\mathcal{B}}$ over the circle around $c$;
    \item for every marked point $i$, the data $\B_i$ is a section of $\mathcal{L}_{\mathcal{B}}$ over $i$;
    \item for every boundary interval $e$ with endpoints $i$ and $j$, the associated pair $(\B_i, \B_j)$ is generic, and $p_e$ is a pinning over $(\B_i, \B_j)$.
\end{itemize}
\ed

\begin{example}
Let $\bS=D_n$ be a regular $n$-gon with marked boundary points sitting on its vertices. Since $\pi_1(D_n)=0$,  the local systems over $D_n$ are trivial. The space $\mathscr{P}_{\G, D_n}$ parametrizes $\G$-orbits of the tuples $(\B_1,...,\B_n; p_1,..., p_n)$ such that every $p_i$ ($i \in \Z/n\Z$) is a pinning over the generic pair $(\B_i,\B_{i+1})$.
Elements of $\mathscr{P}_{\G, D_5}$  are pictured as follows.
\begin{center}
\begin{tikzpicture}[scale=1.1]
\draw[fill=gray!30] (90:1) -- (162:1) -- (234:1) -- (306:1) -- (18:1) -- (90:1);
\node[red]  at (90:1) {$\bullet$};
\node[red]  at (162:1) {$\bullet$};
\node[red]  at (234:1) {$\bullet$};
\node[red]  at (306:1) {$\bullet$};
\node[red]  at (18:1) {$\bullet$};
\node  at (90:1.4) {\small $\B_1$};
\node  at (162:1.4) {\small $\B_2$};
\node  at (234:1.4) {\small $\B_3$};
\node  at (306:1.4) {\small $\B_4$};
\node  at (18:1.4) {\small $\B_5$};
\node[blue]  at (126:1.1) {\small $p_1$};
\node[blue]  at (198:1.1) {\small $p_2$};
\node[blue]  at (270:1.1) {\small $p_3$};
\node[blue]  at (342:1.1) {\small $p_4$};
\node[blue]  at (54:1.1) {\small $p_5$};
    \end{tikzpicture}
 \end{center}
 Following Theorem 2.30 in \cite{SWflag}, the space $\mathscr{P}_{\G, D_n}$ is a smooth affine variety. The cyclic group $\mathbb{Z}/n$ acts on $\mathscr{P}_{\G, D_n}$ by rotation. 
\end{example}

\subsection{Cluster Poisson structures} 
As shown in \cite{GS3}, there are four groups acting on $\mathscr{P}_{\G, \bS}$:
\begin{itemize}
    \item the \emph{mapping class group} of $\bS$,
    \item the group of \emph{outer automorphisms} of $\G$,
    \item the product of  \emph{Weyl groups} over  punctures of $\bS$,
    \item the product of \emph{braid groups} over boundary circles of $\bS$. 
\end{itemize}

\bt[{\cite[Theorem 1.12]{GS3}}] The space $\mathscr{P}_{\G, \bS}$ admits a natural cluster Poisson structure.  The above four groups act on  $\mathscr{P}_{\G, \bS}$ as quasi-cluster Poisson transformations.
\et

Below we briefly recall the cluster Poisson structure on $\mathscr{P}_{\G, \bS}$. 

Let us focus on the case when $\bS$ is a triangle $t$. 
The Poisson structure on $\mathscr{P}_{\G, t}$ can be described  by quivers. When $\G={\rm PGL}_{r+1}$, its corresponding quiver $Q$ is constructed by Fock and Goncharov \cite{FGteich}, and is shown on Figure \ref{dnfonqasacxs21}. 
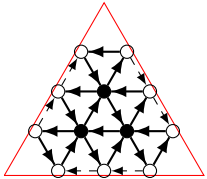
\begin{figure}[b]
\begin{center}
\begin{tikzpicture}[scale=0.61]
\draw[thin, red] (0:0) -- (0:4.35) -- (60:4.35) --(0:0);
\begin{scope}[shift={(30:0.2)}]
\foreach \n in {1,2,3}
{\node[circle,draw,minimum size=5pt,inner sep=0pt] (\n) at (60:\n) {};
\node[circle,draw,minimum size=5pt,inner sep=0pt] (b\n) at (0:\n) {};
\node[circle,draw,minimum size=5pt,inner sep=0pt] (c\n) at ([shift={(4,0)}]120:\n) {};
}
\node[circle,fill,draw,minimum size=5pt,inner sep=0pt] (A) at ([shift={(1,0)}]60:1) {};
\node[circle,fill,draw,minimum size=5pt,inner sep=0pt] (B) at ([shift={(1,0)}]60:2) {};
\node[circle,fill,draw,minimum size=5pt,inner sep=0pt] (C) at ([shift={(2,0)}]60:1) {};
 \draw[directed,dashed] (1) -- (2); 
  \draw[directed,dashed] (2) -- (3); 
   \draw[directed,dashed] (b3) -- (b2); 
  \draw[directed,dashed] (b2) -- (b1); 
 \draw[directed,dashed] (c3) -- (c2); 
  \draw[directed,dashed] (c2) -- (c1); 
\foreach \from/\to in {c1/C,C/A, A/1, c2/B,B/2,c3/3,1/b1, 2/A, A/b2, 3/B,B/C,C/b3, b3/c1, C/c2, b2/C, b1/A, A/B, B/c3}
                 \draw[directed, thick] (\from) -- (\to);    
\end{scope}                 
\end{tikzpicture}
\end{center}
\caption{The quiver for $\mathscr{P}_{{\rm PGL}_4, t}$. The  bold vertices are mutable, the  hollow vertices on the sides are frozen, and the dashed arrows between hallow vertices are of weight $\frac{1}{2}$.}
\label{dnfonqasacxs21}
\end{figure}
The number of vertices of $Q$ is 
 \[
 n=\dim \mathscr{P}_{{\rm PGL}_{r+1}, t}= \dim \B + {\rm rk} \G = \frac{(r+5)r}{2}.
 \]
The quiver $Q$ encodes a skew-symmetric $n\times n$ matrix $\widehat{\varepsilon}=(\widehat{\varepsilon}_{ij})$, where 
\[
\widehat{\varepsilon}_{ij} =\#\{ \mbox{arrows from $i$ to $j$}\} - \#\{ \mbox{arrows from $j$ to $i$}\}.
\]
 The space $\mathscr{P}_{{\rm PGL}_{r+1}, t}$ has a cluster Poisson chart ${\bf x}$ whose coordinates $x_1, \ldots, x_n$ correspond to the vertices of $Q$.
 The coordinates $x_i$ corresponding to the bold vertices in Figure \ref{dnfonqasacxs21} are mutable and are defined by taking  the triple ratios of certain derived flags (\cite[\S 9.3]{FGteich}). The group ${\rm PGL}_{r+1}$ is simply-laced and the multipliers $d_i$ are taken to be 1. Putting them together, we obtain a seed $\Sigma=({\bf x}, \varepsilon)$ for $\mathscr{P}_{{\rm PGL}_{r+1}, t}$. 

For an arbitrary semisimple group $\G$, the quivers and cluster Poisson coordinates for $\mathscr{P}_{\G, t}$ are systematically constructed in Section 5.3 of \cite{GS3}. Let  $v$ be a vertex of $t$ and let ${\bf i}$ be a reduced decomposition of the longest Weyl group element $w_0$. For every pair $(v, {\bf i})$, the paper \cite{GS3} constructed a quiver $Q_{v, {\bf i}}$ for $\mathscr{P}_{\G, t}$. The construction of $Q_{v, {\bf i}}$ generalizes the amalgamation procedure developed in \cite{FGamalgamation},  Here are several features of $Q_{v, {\bf i}}$.
\begin{itemize}
\item The mutable part of the quiver $Q_{v, {\bf i}}$ coincides with the mutable part of the quiver for the double Bruhat cell $\G^{e, w_0}:= \B \cap \B^-w_0\B^-$ in \cite{BFZ}. 
\item The frozen part of the quiver $Q_{v, {\bf i}}$ contains  $3r$ vertices, where $r={\rm rk}\G$.
\item In general the quiver $Q_{v, {\bf i}}$ is not planar. However one may still place $Q_{v, {\bf i}}$ on the top of $t$ such that each side of $t$ contains $r$ frozen vertices. 
\end{itemize}
The paper \cite{GS3} further assigned   a cluster Poisson chart ${\bf x}_{v, {\bf i}}$ to every pair $(v,{\bf i})$ and proved that the seeds $\Sigma_{v, {\bf i}}=({\bf x}_{v, {\bf i}}, \varepsilon_{v, {\bf i}})$ are  mutation equivalent. 

Now let $\bS$ be a general surface. An {ideal triangulation} $T$ of $\bS$ is a triangulation of $\bS$ whose vertices are the punctures and marked points of $\bS$. Consider the {\it decorated triangulation}
\be
\label{dec.ideal.tri}
\widetilde{T}:=(T, \{v_t\}, \{{\bf i}_t\}),
\ee
where $T$ is an ideal triangulation,  ${\bf i}_t$ is a reduced decomposition of $w_0$ assigned to every triangle $t\in T$, and $v_t$ is a vertex of $t$. By amalgamating the local seeds $\Sigma_{v_t, {\bf i}_t}$ along the frozen vertices on corresponding edges of $t$'s, with $t$ running through the triangles of $T$, we obtain a seed $\Sigma_{\widetilde{T}}$ for $\mathscr{P}_{\G, \bS}$. 
By Definition \ref{svafonv}, we consider the cluster Poisson algebra 
\be
\label{sdafionio}
\mathcal{O}_{\rm cl}(\mathscr{P}_{\G, \bS}):=\mathcal{O}(\Sigma_{\widetilde{T}}).
\ee
Theorem 5.11 of \cite{GS3} states that the seeds $\Sigma_{\widetilde{T}}$ associated to all the decorated triangulation $\widetilde{T}$ are mutation equivalent. Therefore $\mathcal{O}_{\rm cl}(\mathscr{P}_{\G, \bS})$ is independent of the  $\widetilde{T}$ chosen.

\section{Proofs}
\subsection{Proof of Theorem \ref{cdsn.cluster.poisson}}
\label{sec.3.3}
The proof of Theorem \ref{cdsn.cluster.poisson} uses a standard codimension 2 argument as in \cite{BFZ}.

\vskip 1mm

When $\bS=D_n$ is a polygon, by definition we have 
\[
\mathscr{P}_{\G, D_n}\stackrel{\sim}{=} {\rm Conf}_{w_0^{n-2}}^e(\mathcal{A}_{\rm ad}) \times {\rm H}^{n-2},
 \]
 where ${\rm Conf}_{w_0^{n-2}}^e(\mathcal{A}_{\rm ad})$ is a double Bott-Samelson variety in \cite{SWflag}. In this case, the proof  goes through the same steps as in the proof of  Theorem 3.44 of \cite{SWflag}.
In fact, as we will see soon, it suffices to first prove the cases when $n=3, 4$, which correspond to the double Bruhat cells $\G^{e, w_0}$ and $\G^{w_0, w_0}$; the theorem follows easily from Theorem 2.10 of \cite{BFZ}.

\vskip 1mm

Let  $\widetilde{T}=(T, \{v_t\}, \{{\bf i}_t\})$ be a decorated triangulation of an arbitrary surface $\bS$. The edges of the ideal triangulation $T$ that are not boundary intervals of $\bS$ are called {\it diagonals}.

For an arbitrary diagonal $e$, let $\Sigma_{\widetilde{T}, e}$ be the seed obtained from $\Sigma_{\widetilde{T}}$ by freezing all the mutable vertices that are placed on the diagonals different from $e$.

Let $\mathscr{P}_{\G, \bS}^{T, e} \subset \mathscr{P}_{\G, \bS}$ consist of data $(\mathcal{L}, \{\B_c\}, \{\B_i\}, \{p_e\})$ such that 
 for every diagonal $f\neq e$ in $T$, the flags $(\B_f^+, \B_f^-)$ sitting at the ends of $f$ are generic.

\begin{lemma}
\label{sadnoin}
The ring  $\mathcal{O}(\mathscr{P}_{\G, \bS}^{T,e})$ coincides with the cluster Poisson algebra $\mathcal{O}(\Sigma_{\widetilde{T},e})$. 
\end{lemma}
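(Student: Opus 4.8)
The plan is to reduce the statement to the already-settled polygon cases $n=3,4$ by cutting $\bS$ open along every diagonal except $e$. First I would analyze $\mathscr{P}^{T,e}_{\G,\bS}$ geometrically. By definition the flags at the two ends of each diagonal $f\neq e$ are generic, so the amalgamation isomorphism of \cite{GS3} (following \cite{FGamalgamation}) applies at each such $f$ and lets me cut $\bS$ along all diagonals $f\neq e$. Since $e$ is shared by two triangles $t',t''$ of $T$, this cutting leaves the quadrilateral $Q_e:=t'\cup_e t''$ intact while separating every other triangle, and it trivializes the gluing data into Cartan framings. I therefore expect an isomorphism of Poisson varieties
\[
\mathscr{P}^{T,e}_{\G,\bS}\;\cong\;\mathscr{P}_{\G,Q_e}\times\prod_{t\notin\{t',t''\}}\mathscr{P}_{\G,t}\times\H^{N},
\]
where the torus factors $\H^{N}$ record the framings along the cut diagonals. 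Here $Q_e\cong D_4$ and each remaining triangle is a copy of $D_3$, so every factor is a polygon moduli space.

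Next I would match this with the cluster side. In the amalgamated seed $\Sigma_{\widetilde T}$, the mutable vertices lying on a diagonal $f$ are exactly those produced by gluing the frozen vertices of the two triangles adjacent to $f$; refreezing them for all $f\neq e$ therefore undoes the amalgamation at each such $f$. Hence $\Sigma_{\widetilde T,e}$ is the seed of the cut surface: the amalgamation of the triangle seeds carried out only along $e$, with the former gluing coordinates retained as frozen, and thus invertible, variables. After refreezing, every arrow between the mutable parts of two distinct pieces passes through a frozen vertex, so mutations in different pieces no longer interact. Passing to upper bounds via Lemma \ref{GHK.up}, the cluster Poisson algebra then factors as
\[
\mathcal{O}(\Sigma_{\widetilde T,e})\;\cong\;\mathcal{O}(\Sigma_{Q_e})\otimes\bigotimes_{t\notin\{t',t''\}}\mathcal{O}(\Sigma_{t})\otimes\mathbb{C}[\H^{N}],
\]
and the invertibility of the frozen gluing variables matches the torus factors $\H^{N}$ above.

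Finally I would invoke the base cases: for $D_4$ and $D_3$ the identity $\mathcal{O}(\mathscr{P}_{\G,D_n})=\mathcal{O}(\Sigma)$ has already been established through the double Bruhat cells $\G^{w_0,w_0}$ and $\G^{e,w_0}$ and Theorem 2.10 of \cite{BFZ}, equivalently by the argument of Theorem 3.44 of \cite{SWflag}. Comparing the two product decompositions factor by factor then yields $\mathcal{O}(\mathscr{P}^{T,e}_{\G,\bS})=\mathcal{O}(\Sigma_{\widetilde T,e})$ as Poisson algebras. The main obstacle I anticipate is the factorization in the middle step: one must prove that refreezing the gluing vertices makes the cluster Poisson algebra split exactly as the geometric product, i.e.\ that amalgamation is compatible with the passage to cluster Poisson algebras. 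By Lemma \ref{GHK.up} this reduces to a statement about the upper bounds $\mathcal{U}(\Sigma_{\widetilde T,e})$, which can be checked one mutation direction at a time; the delicate point is to track the shared frozen variables and confirm that no additional regular functions are created along the cut loci, where the genericity hypothesis and the normality of the polygon pieces are used.
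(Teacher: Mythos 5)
Your overall strategy is the one the paper uses: cut $\bS$ along all diagonals $f\neq e$, reduce to a quadrilateral containing $e$ together with the remaining triangles, and invoke the polygon cases settled via the double Bruhat cells $\G^{e,w_0}$, $\G^{w_0,w_0}$ and \cite[Theorem 2.10]{BFZ}. However, both of your intermediate product decompositions are false as stated, and the error is not cosmetic. The gluing map
$\gamma:\bigl(\prod_{t\,\ni\hskip-1.5mm\diagup\, e}\mathscr{P}_{\G,t}\bigr)\times\mathscr{P}_{\G,q}\to\mathscr{P}_{\G,\bS}$
is \emph{surjective} onto $\mathscr{P}_{\G,\bS}^{T,e}$ with fibers $\H^{N}$ (one copy of $\H$ per cut diagonal, accounting for the choice of pinning over the generic pair of flags at its ends, a choice which the glued object forgets). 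So the torus factor sits on the side of the product of pieces, not on the side of $\mathscr{P}_{\G,\bS}^{T,e}$: a dimension count already rules out your formula, e.g.\ gluing two triangles into a quadrilateral gives $2(\ell(w_0)+2r)=\dim\mathscr{P}_{\G,D_4}+r$, whereas your isomorphism would require the product to have dimension $2r$ \emph{less} than what it has. The same inversion occurs on the cluster side: under amalgamation a frozen vertex $i$ on a cut diagonal satisfies $\gamma^\ast(x_i)=x_i'x_i''$, so $\mathcal{O}(\Sigma_{\widetilde T,e})$ is a proper \emph{subalgebra} of the tensor product of the piece algebras (only the products of the paired frozen variables occur, not the individual factors); it does not split off a $\mathbb{C}[\H^{N}]$ tensor factor. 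Consequently the "factor by factor" comparison at the end has no factors to compare.

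The fix is exactly what the paper does, and it is close in spirit to what you wrote: instead of asserting product decompositions, one uses the two \emph{injective} pullbacks
$\gamma^\ast:\mathcal{O}(\mathscr{P}_{\G,\bS}^{T,e})\hookrightarrow\bigotimes_t\mathcal{O}(\mathscr{P}_{\G,t})\otimes\mathcal{O}(\mathscr{P}_{\G,q})$
and
$\gamma^\ast:\mathcal{O}(\Sigma_{\widetilde T,e})\hookrightarrow\bigotimes_t\mathcal{O}_{\rm cl}(\mathscr{P}_{\G,t})\otimes\mathcal{O}_{\rm cl}(\mathscr{P}_{\G,q})$,
observes that the two target algebras coincide by the polygon cases, and checks that a rational function $F$ lies in either source exactly when $\gamma^\ast(F)$ lies in the common target. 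Your final paragraph correctly identifies the residual delicate point (tracking the shared frozen variables so that no extra regular functions appear along the cut loci), but the argument preceding it needs to be restructured around the fibration $\gamma$ rather than a direct product.
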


\begin{proof}

Let $q$ be the quadrilateral in $T$ that contains $e$ as a diagonal.
As illustrated below, the surface $\bS$ can be obtained by gluing  $q$ and the triangles of $T$ that  do not contain $e$. 
\begin{center}
\begin{tikzpicture}[scale=0.6]
\draw[thin] (0,0) -- (0,-4) -- (4,-4) --(4,0)--(0,0);
\draw[dashed] (0,0)--(1.5,-1.5)--(0,-4);
\draw[dashed] (1.5,-1.5)--(4,0);
\node[blue] at (1.5,-1.5) {$\circ$};
\node[red] at (0,0) {$\tiny \bullet$};
\node[red] at (0,-4) {$\tiny \bullet$};
\node[red] at (4,-4) {$\tiny \bullet$};
\node[red] at (4,0) {$\tiny \bullet$};
\node at (1.5, -.5) {$t_1$};
\node at (.5, -1.5) {$t_2$};
\node at (2.5,-2.5) {$q$};

\begin{scope}[shift={(-10,0.5)}]
\node at (1.5, -.5) {$t_1$};
\draw[thin] (0,0) -- (1.5,-1.5) --(4,0)--(0,0);
\node[red] at (0,0) {$\tiny \bullet$};
\node[red] at (1.5,-1.5) {$\tiny \bullet$};
\node[red] at (4,0) {$\tiny \bullet$};
\end{scope}    

\begin{scope}[shift={(-10.5,0)}]
\node at (.5, -1.5) {$t_2$};
\draw[thin] (0,0) -- (1.5,-1.5) --(0,-4)--(0,0);
\node[red] at (0,0) {$\tiny \bullet$};
\node[red] at (1.5,-1.5) {$\tiny \bullet$};
\node[red] at (0,-4) {$\tiny \bullet$};
\end{scope}  

\begin{scope}[shift={(-9.5,-.5)}]
\node at (2.5,-2.5) {$q$};
\draw[thin] (4,0) -- (1.5,-1.5) --(0,-4)--(4,-4)--(4,0);
\node[red] at (4,0) {$\tiny \bullet$};
\node[red] at (1.5,-1.5) {$\tiny \bullet$};
\node[red] at (0,-4) {$\tiny \bullet$};
\node[red] at (4,-4) {$\tiny \bullet$};
\end{scope}  

\draw[-latex, thick] (-4,-2) -- (-2,-2);
\end{tikzpicture}
\end{center}
By \cite[Lemma 2.12]{GS3}, there is a gluing map 
\[
\gamma:~\left(\prod_{t\in T ~|~ e\notin t} \mathscr{P}_{\G, t}\right)\times \mathscr{P}_{\G, q} \longrightarrow \mathscr{P}_{\G, \bS},
\]
which identifies the pinnings on the corresponding pairs of glued edges. By definition, the image of $\gamma$ is the subspace $\mathscr{P}_{\G, \bS}^{T, e}$. The pull-back of $\gamma$ is an injective algebra homomorphism 
\be
\label{sadv}
\gamma^\ast: ~\mathcal{O}(\mathscr{P}_{\G, \bS}^{T, e}) ~{\hlra} ~\left( \bigotimes_{t\in T ~|~ e\notin t} \mathcal{O}(\mathscr{P}_{\G, t})\right) \bigotimes \mathcal{O}(\mathscr{P}_{\G, q}).
\ee
Recall the procedure of amalgamation. The pull-back of $\gamma$ induces an injection of cluster Poisson algebras
\be
\label{acsniadv20}
\gamma^\ast: ~ \mathcal{O}(\Sigma_{\widetilde{T}, e}) ~{\hlra} ~  \left( \bigotimes_{t\in T ~|~ e\notin t} \mathcal{O}_{\rm cl}(\mathscr{P}_{\G, t})\right) \bigotimes \mathcal{O}_{\rm cl}(\mathscr{P}_{\G, q}).
\ee
Here if $i$ is a vertex on a diagonal $f\neq e$, then $\gamma^\ast(x_i)$ is the product $x_i'x_i''$ of variables on corresponding edges of the righthand side of \eqref{acsniadv20}. Otherwise $\gamma^\ast(x_i)$ remains invariant.

 By the above discussion on polygon cases, we get
 \[
 \left( \bigotimes_{t\in T ~|~ e\notin t} \mathcal{O}(\mathscr{P}_{\G, t})\right) \bigotimes \mathcal{O}(\mathscr{P}_{\G, q}) =  \left( \bigotimes_{t\in T ~|~ e\notin t} \mathcal{O}_{\rm cl}(\mathscr{P}_{\G, t})\right) \bigotimes \mathcal{O}_{\rm cl}(\mathscr{P}_{\G, q}).
 \]
 Let $F$ be a rational function of $\mathscr{P}_{\G, \bS}^{T, e}$. We have
\begin{align*}
 F \in  \mathcal{O}(\mathscr{P}_{\G, \bS}^{T, e}) 
~~ &\Longleftrightarrow ~~ \gamma^\ast(F) \in \left( \bigotimes_{t\in T ~|~ e\notin t} \mathcal{O}(\mathscr{P}_{\G, t})\right) \bigotimes \mathcal{O}(\mathscr{P}_{\G, q}) 
 \\
 & \Longleftrightarrow ~~ \gamma^\ast(F) \in  \left( \bigotimes_{t\in T ~|~ e\notin t} \mathcal{O}_{\rm cl}(\mathscr{P}_{\G, t})\right) \bigotimes \mathcal{O}_{\rm cl}(\mathscr{P}_{\G, q})\\
 & \Longleftrightarrow ~~ F\in \mathcal{O}(\Sigma_{\widetilde{T}, e}).
\end{align*}
The Lemma is proved.
\end{proof}
Now let us go through all the diagonals $e$ of $T$ and set
\[
\mathscr{P}_{\G, \bS}':=\bigcup_{e}\mathscr{P}_{\G, \bS}^{T, e}~\subset ~\mathscr{P}_{\G, \bS}. 
\]
By Lemma \ref{GHK.up} and Lemma \ref{sadnoin}, we get 
\[
\mathcal{O}\left( \mathscr{P}_{\G, \bS}' \right)= \bigcap_{e} \mathcal{O}(\mathscr{P}_{\G, \bS}^{T, e})=\bigcap_e \mathcal{O}(\Sigma_{\widetilde{T},e}) =\bigcap_e \mathcal{U}(\Sigma_{\widetilde{T},e}) = \mathcal{U}(\Sigma_{\widetilde{T}})= \mathcal{O}_{\rm cl}(\mathscr{P}_{\G, \bS}) .
\]
For a pair $e, f$ of diagonals of $T$, let $\mathscr{P}_{\G, \bS}^{e, f}\subset \mathscr{P}_{\G, \bS}$ be  the subspace such that the pairs of flags associated to $e$ and $f$ are not generic. Clearly the spaces $\mathscr{P}_{\G,\bS}^{e,f}$ are of codimension $\geq 2$ in $\mathscr{P}_{\G, \bS}$.  By definition, the compliment of $\mathscr{P}_{\G, \bS}'$ in $\mathscr{P}_{\G, \bS}$ is the union $\bigcup_{e,f} \mathscr{P}_{\G,\bS}^{e,f}.$
Therefore $\mathcal{O}({\mathscr{P}_{\G, \bS}})=\mathcal{O}({\mathscr{P}_{\G, \bS}'})$, which concludes the proof of Theorem \ref{cdsn.cluster.poisson}. 

\subsection{Grothendieck-Springer resolution}
We consider the \emph{Grothendieck-Springer} resolution 
\[\widetilde{\G}:=\left\{(g, \B)\in \G\times \mathcal{B} ~|~ g\in \B\right\}.
\]
The second projection $\pi_2$ from $\widetilde{\G}$ to $\mathcal{B}$ makes $\widetilde{\G}$ a smooth $\B$-bundle over $\mathcal{B}$. The first projection $\pi_1$ from  $\widetilde{\G}$ to $\G$ generically is  $|W|$-to-1. The Weyl group birationally acts on $\widetilde{\G}$ by altering the flag $\B$. We include a careful proof of the following  result. 
\bp 
\label{qncfdo}
The action of the Weyl group $W$ on $\widetilde{\G}$ induces automorphisms of the ring $\mathcal{O}(\widetilde{\G})$. The first projection $\pi_1$ gives a canonical algebra isomorphism
\[
\pi_1^\ast: ~\mathcal{O}(\G)\stackrel{\sim}{\lra} \mathcal{O}(\widetilde{\G})^W. 
\]
\ep

\begin{proof}
 Recall the Cartan subgroup $\H$. For every Borel subgroup $\B\in \mathcal{B}$, there is a canonical surjective homomorphism
\be
\label{pqrnjamdp}
\pi_{\B}: \B\lra \B/[\B, \B]\stackrel{\sim}{\longrightarrow} {\rm H}. 
\ee
We obtain a morphism 
\[
\eta: ~ \widetilde{\G}\longrightarrow \G\times {\rm H},\hskip 7mm (g, \B)\longmapsto (g, \pi_\B(g)).
\]
Denote by $\widetilde{\G}_{\bf a}$ the image of $\eta$. Note that $\widetilde{\G}_{\bf a}$ is affine but not smooth in general. For example, when $\G={\rm SL}_2$, then 
\[
\widetilde{\G}_{\bf a}=\left\{(g,h)\in {\rm G}\times {\rm H} ~\middle |~ {\rm tr}(g) ={\rm tr}(h)\right\}. 
\]
In this case, $\widetilde{\G}_{\bf a}$ has two singular points when $g=h=\pm I_2$.

We show that $\widetilde{\G}_{\bf a}$ is an affinization of $\widetilde{\G}$. For every $g\in\G$, the dimension of its centralizer  $Z(g)$ is greater or equal to the rank of $\G $. The element  $g$ is defined to be \emph{regular} if 
\[\dim Z(g)={\rm rk} \G.\] 
For instance, when $\G={\rm SL}_n$, an element $g$ is regular  if and only if its Jordan normal form contains a single Jordan block for each eigenvalue. See \cite{Ste} for further expositions. 

Let us set 
\[
Q=\left\{(g, \B)\in \widetilde{\G}~\middle|~ \mbox{$g$ is irregular}\right\}.
\]
For every $\B\in \mathcal{B}$, the subvariety $Q\cap \pi_2^{-1}(\B)$ consists of irregular elements in $\B$ and  is of codimension 2 in the fiber $\pi_2^{-1}(\B)$. 
Therefore $Q$ is of codimension 2 in $\widetilde{\G}$.
By \cite[Thm.1.1 \& 1.2]{Ste}, the projection $\eta$ from $\widetilde{\G}$ to $\widetilde{\G}_{\bf a}$ is an ismorphism outside $Q$.
 Therefore we get
\be
\label{vfadpmn}
\mathcal{O}(\widetilde{\G}){=}\mathcal{O}(\widetilde{\G}_{\bf a}).
\ee
The $W$-action on ${\rm H}$ extends to a regular $W$-action on $\widetilde{\G}_{\bf a}$. By definition, the projection $\eta$ from $\widetilde{\G}$ to $\widetilde{\G}_{\bf a}$ is $W$-equivarient. Therefore $W$ maps regular functions of $\widetilde{\G}$ to regular functions, which complete the proof of the first claim.

The fibers of the projection from $\widetilde{\G}_{\bf a}$ to $\G$ are $W$-orbits. Hence the the pull-back of the projection give an injective homomorphism 
\be
\la{sdnqcvio}
\mathcal{O}(\G) \hlra \mathcal{O}(\widetilde{\G}_{\bf a})^W.
\ee
It remains to show the surjectivity. Let $F\in \mathcal{O}(\widetilde{\G}_{\bf a})^W$. Since $\widetilde{\G}_{\bf a}$ is an affine subvariety of $\G \times {\rm H}$, the function $F$ can be obtain as the restriction of a function 
\[\hat{F}\in \mathcal{O}(\G \times {\rm H})=\mathcal{O}(\G)\otimes \mathcal{O}({\rm H}).\] 
Let us write
\[
F(g,h)=\hat{F}(g,h)= \sum_{i=1}^s P_i(g)Q_i(h).
\]
Without loss of generality, let us assume that the functions $Q_i$ are $W$-invariant. Otherwise, one may consider the $W$-orbit containing $\hat{F}$ and then take the average. By the Chavelley restriction theorem, there is a canonical isomorphism
\[
 \mathcal{O}({\rm H})^W = \mathcal{O}(\G)^{\G}.
\]
In particular, every function $Q_i\in \mathcal{O}({\rm H})^W$ uniquely corresponds to a  function $Q_i'\in \mathcal{O}(\G)^{\G}$ such that whenever $(g, h)\in \widetilde{\G}_{\bf a}$ one has
\[
Q_i(h)=Q_i'(g).
\]
Therefore $F$ can be obtained from a function $\sum_{i=1}^r P_i Q_i'\in \mathcal{O}(\G)$ and  \eqref{sdnqcvio} is surjective.  
\end{proof}

Let $\bS=\odot$ be a once punctured disk with two marked points on its boundary. Its fundamental group $\pi_1(\odot)=\Z$. Local systems over $\odot$ are determined by  the monodromies surrounding the puncture. The space $\mathscr{P}_{\G, \odot}$ parametrizes $\G$-orbits of  the data $(g, \B, \B_1, \B_2, p_1, p_2)$ as follows.
\begin{center}
\begin{tikzpicture}[scale=0.7]
\draw  (0,0) circle (20mm);
\node [blue, thick]  at (0,0) {\Large $\circ$};
\node [red]  at (0,2) {\small $\bullet$};
\node [red]  at (0,-2) {\small $\bullet$};
\node [label=above: {\small $\B_1$}] at (0,2) {};
\node [label=below: {\small $\B_2$}] at (0,-2) {};
\node [label=below: {\small $\B$}] at (0,0) {};
\node [blue] at (-2.5,0) {\small $p_1$ };
\node [blue] at (2.5,0) {\small $p_2$ };
\node [label=below: {\small $\B$}] at (0,0) {};
\draw[thick, -latex ] ([shift=(20:10mm)]0,0) arc (380:20:10mm);
\node [label=right: {$g$}] at (20:1) {};
\draw[blue, latex- ] ([shift=(70:21mm)]0,0) arc (-70:-110:21mm);
\draw[blue, latex- ] ([shift=(-70:21mm)]0,0) arc (70:110:21mm);
    \end{tikzpicture}
 \end{center}
Here the group element $g$ is contained in the Borel subgroup $\B$ assigned to the puncture. If $g$ is regular and semisimple, then the set of Borel subgroups containing $g$ is a Weyl group torsor. The Weyl group acts birationally on $\mathscr{P}_{\G, \odot}$ by altering  $\B$. 

Fix a pinning $p=(\B_+, \B_-, x_i, y_i; i\in I)$.
The next Lemma gives an alternative description of $\mathscr{P}_{\G, \odot}$. As an easy corollary, $\mathscr{P}_{\G, \odot}$ is a smooth variety but not  affine. 
\begin{lemma}
\label{decription.p0dot}
The space $\mathscr{P}_{\G, \odot}$ is naturally isomorphic to a subvariety of $\B_+\times \B_-\times \mathcal{B}$:
\be
\label{dnws}
\mathscr{P}_{\G, \odot}\stackrel{\sim}{=}\left \{ (b_1, b_2, \B) \in \B^+\times\B^-\times \mathcal{B} ~\middle|~ b_1b_2^{-1}\in \B \right\}.
\ee
\end{lemma}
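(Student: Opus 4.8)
The statement to prove is Lemma \ref{decription.p0dot}, which identifies $\mathscr{P}_{\G, \odot}$ with the subvariety
\[
\left\{ (b_1, b_2, \B) \in \B^+\times\B^-\times \mathcal{B} ~\middle|~ b_1b_2^{-1}\in \B \right\}.
\]
The plan is to unwind Definition \ref{sdnion} in the special case $\bS=\odot$ and rigidify the $\G$-orbit data by using the pinning to fix a frame. First I would observe that since $\pi_1(\odot)=\Z$, a $\G$-local system $\mathcal{L}$ over $\odot$ is determined up to isomorphism by a single monodromy $g\in\G$ around the puncture, and a trivialization of $\mathcal{L}$ (a choice of fiber identification with $\G$) reduces the data $(\mathcal{L},\B,\B_1,\B_2,p_1,p_2)$ to a tuple living in $\G$ together with flags and pinnings, subject only to residual gauge by the stabilizer of a point.

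The key normalization step is to use the fixed standard pinning $p=(\B_+,\B_-,x_i,y_i;i\in I)$ to eliminate the $\G$-action. Concretely, I would trivialize $\mathcal{L}$ at the first marked point so that the boundary pinning $p_1$ there becomes the standard pinning; this fixes the gauge up to the subgroup preserving $p_1$, and since the space of pinnings over a generic pair is a $\G$-torsor, demanding $p_1=p$ pins the frame down completely (a pinning has trivial stabilizer). Once the frame is fixed, the generic pair underlying $p_1$ becomes $(\B_+,\B_-)$; parallel-transporting along the boundary interval to the second marked point records an element $b_1\in\B_+$ as the image of the second flag's pinning data landing in $\B_+$, and transporting along the other boundary interval records an element $b_2\in\B_-$. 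Thus the two pinnings $p_1, p_2$ together with the boundary sections $\B_1,\B_2$ get encoded as a pair $(b_1,b_2)\in\B_+\times\B_-$.

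Next I would account for the puncture data: the flat section $\B$ over the loop around the puncture must be preserved by the monodromy, equivalently the monodromy $g$ lies in $\B$. Tracking how $g$ is reconstructed from the boundary transports — going around the puncture is homotopic to concatenating the two boundary paths — I expect the monodromy to come out as $g=b_1b_2^{-1}$ (up to the conventions for the transport directions in the figure). The condition that $\B$ be a flat section invariant under the monodromy then reads exactly $b_1b_2^{-1}=g\in\B$, which is the defining relation on the right-hand side of \eqref{dnws}. I would then check the map $(\mathcal{L},\B,\B_1,\B_2,p_1,p_2)\mapsto(b_1,b_2,\B)$ is well defined on $\G$-orbits, and construct the inverse by building the local system with monodromy $b_1b_2^{-1}$ and reattaching the standard pinning, yielding a bijection which is visibly algebraic in both directions.

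The main obstacle I anticipate is bookkeeping the conventions so that the relation comes out as $b_1b_2^{-1}\in\B$ rather than some conjugate or inverse variant: this requires care about the orientation of the boundary intervals, the direction of parallel transport (the arrows in the figure), and the precise sense in which $p_1$ is normalized to the standard pinning versus $p_2$ being measured relative to it. A secondary point requiring attention is verifying that fixing $p_1=p$ genuinely rigidifies the $\G$-orbit with no residual stabilizer — this rests on the pinning space being a principal homogeneous $\G$-space, as stated after Definition \ref{sdnion}, so that the normalization is both possible and unique. Once the conventions are fixed, the smoothness and non-affineness follow immediately since the right-hand side of \eqref{dnws} fibers over the flag variety $\mathcal{B}$ (which is projective, hence the total space is not affine) with smooth fibers.
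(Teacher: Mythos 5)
Your proposal is correct and follows essentially the same route as the paper: normalize the $\G$-orbit by setting $p_1$ equal to the standard pinning (unique, since pinnings form a $\G$-torsor), record the two boundary transports as $b_1\in\B^+$ and $b_2\in\B^-$, and observe that the monodromy $b_1b_2^{-1}$ must lie in the flag $\B$ at the puncture. The paper's proof is exactly this normalization argument, stated just as briefly, with the inverse map likewise asserted to be clear.
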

\begin{proof}
 Every element in $\mathscr{P}_{\G, \odot}$ contains a unique representative with its left pinning $p_1=p$. Along the path near $\B_1$ (illustrated by the blue arrow in the above figure), there is a unique element $b_1\in \B^+$ transporting $p_1$ to $p_2$. Similarly, there is an element $b_2\in \B^-$ along the path near $\B_2$ transporting $p_1$ to $p_2$. The monodromy $g=b_1b_2^{-1}\in \B$. In this way, we get a morphism from $\mathscr{P}_{\G, \odot}$ to the righthand side of \eqref{dnws}. It is clear that the morphism is invertible. 
\end{proof}

By forgetting the flags assigned to the puncture of $\odot$, we get a natural projection
\be
\label{adondndwvn}
\pi:~ \mathscr{P}_{\G, \odot} \lra \B^+\times \B^-.
\ee
\begin{lemma} The birational Weyl group action on $\mathscr{P}_{\G, \odot}$ induces automorphisms of the coordinate ring $\mathcal{O}(\mathscr{P}_{\G, \odot})$. The pull-back of \eqref{adondndwvn} gives an isomorphism:
\[
\pi^\ast: ~\mathcal{O}(\B^+\times \B^-)\stackrel{\sim}{\lra} \mathcal{O}\left(\mathscr{P}_{\G, \odot}\right)^W.
\]
\end{lemma}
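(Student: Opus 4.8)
The plan is to realize $\mathscr{P}_{\G,\odot}$ as a fibre product over the Grothendieck--Springer resolution and to deduce the Lemma from Proposition \ref{qncfdo}. Write $X:=\B^+\times\B^-$ and let $m:X\to\G$ be the morphism $m(b_1,b_2)=b_1b_2^{-1}$. Comparing \eqref{dnws} with the definition of $\widetilde{\G}$, the map $(b_1,b_2,\B)\mapsto\big((b_1,b_2),(b_1b_2^{-1},\B)\big)$ identifies
\[
\mathscr{P}_{\G,\odot}\ \cong\ X\times_{\G}\widetilde{\G},
\]
where $X\to\G$ is $m$ and $\widetilde{\G}\to\G$ is the first projection $\pi_1$. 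Under this identification the projection $\pi$ of \eqref{adondndwvn} is the projection to $X$, on which $W$ acts trivially, while the birational $W$-action altering $\B$ is the pullback of the birational $W$-action on $\widetilde{\G}$; in particular $\pi$ is $W$-invariant and $\tilde{m}:(b_1,b_2,\B)\mapsto(b_1b_2^{-1},\B)$ is $W$-equivariant. Thus the Lemma is the relative-over-$X$ analogue of Proposition \ref{qncfdo}, and the strategy is to transport each step of that proof through the base change along $m$.

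For the first assertion (regularity of the $W$-action) I would mimic the affinization step of Proposition \ref{qncfdo}. Since $\mathcal{B}$ is projective, $\pi_1$ is proper, hence so is the base change $\pi$. Base-changing $\eta:\widetilde{\G}\to\widetilde{\G}_{\bf a}\subseteq\G\times\H$, $(g,\B)\mapsto(g,\pi_\B(g))$, along $m$ gives a $W$-equivariant morphism
\[
\zeta:\ \mathscr{P}_{\G,\odot}\longrightarrow X\times\H,\qquad (b_1,b_2,\B)\longmapsto\big(b_1,b_2,\pi_\B(b_1b_2^{-1})\big),
\]
where $W$ acts regularly on $X\times\H$ through the $\H$-factor and fixes the (closed) image. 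Because $\eta$ is an isomorphism off the codimension-$2$ irregular locus $Q\subset\widetilde{\G}$, the map $\zeta$ is an isomorphism off $B:=\tilde{m}^{-1}(Q)$; a dimension count as in Proposition \ref{qncfdo} (using $\operatorname{codim}Q=2$ and that the generic fibre of $m$ has dimension $r=\operatorname{rk}\G$) shows $\operatorname{codim}B\ge 2$ in the smooth variety $\mathscr{P}_{\G,\odot}$. I would then take $\mathscr{P}_{\bf a}$ to be the Stein factorization $\mathscr{P}_{\G,\odot}\to\mathscr{P}_{\bf a}\xrightarrow{\nu}X$ of the proper map $\pi$: since $\mathscr{P}_{\G,\odot}$ is smooth (hence normal), $\mathscr{P}_{\bf a}$ is a normal affine variety, $\nu$ is finite, and $\mathcal{O}(\mathscr{P}_{\G,\odot})=\mathcal{O}(\mathscr{P}_{\bf a})$. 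The $W$-action, being regular on the affine image of $\zeta$ and agreeing with the birational action on the codimension-$\le 1$ locus where $\zeta$ is an isomorphism, extends to a regular $W$-action on $\mathscr{P}_{\bf a}$, and therefore preserves $\mathcal{O}(\mathscr{P}_{\G,\odot})$.

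For the isomorphism I would argue by integral closure rather than by a direct base-change of invariants. By the previous paragraph $\mathcal{O}(\mathscr{P}_{\G,\odot})=\mathcal{O}(\mathscr{P}_{\bf a})$ is a \emph{finite} (hence integral) $\mathcal{O}(X)$-module via $\pi^\ast$. Over the regular semisimple locus the Borels containing a given monodromy form a $W$-torsor, so $\pi$ is generically a $W$-Galois covering; consequently $\mathrm{Frac}\big(\mathcal{O}(\mathscr{P}_{\G,\odot})\big)^W=\mathrm{Frac}(\mathcal{O}(X))$. Hence $\mathcal{O}(\mathscr{P}_{\G,\odot})^W$ is integral over $\mathcal{O}(X)$ and contained in $\mathrm{Frac}(\mathcal{O}(X))$; since $X=\B^+\times\B^-$ is smooth, $\mathcal{O}(X)$ is integrally closed, so
\[
\mathcal{O}(\mathscr{P}_{\G,\odot})^W=\mathcal{O}(X)=\pi^\ast\,\mathcal{O}(\B^+\times\B^-).
\]
(Equivalently, Chevalley restriction identifies $\mathscr{P}_{\bf a}$ with $X\times_{\H/W}\H$, giving $\mathcal{O}(\mathscr{P}_{\bf a})=\mathcal{O}(X)\otimes_{\mathcal{O}(\H)^W}\mathcal{O}(\H)$, whose $W$-invariants one computes by applying the Reynolds operator to the $\mathcal{O}(\H)$-factor; but the integral-closure argument avoids any flatness issue.) This exhibits $\pi^\ast$ as the desired isomorphism.

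The hard part will be the passage to the affine model: a priori $\mathscr{P}_{\G,\odot}$ is neither affine nor finite over $X$ (the fibres of $\pi$ jump to positive-dimensional Springer fibres over the non-regular locus), so the content is to show that its ring of regular functions is captured by a normal affine variety finite over $X$ on which the merely birational $W$-action becomes regular. This is exactly the package of properness of $\widetilde{\G}\to\G$, the codimension-$2$ structure of the irregular locus, and smoothness of $\mathscr{P}_{\G,\odot}$, assembled through the Stein factorization precisely as in Proposition \ref{qncfdo}. Once that affinization is in place, both the regularity of the action and the computation of invariants follow as above.
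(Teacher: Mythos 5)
Your proposal is correct, but it reaches the conclusion by a different mechanism than the paper. Both arguments start from the same geometric observation that Lemma \ref{decription.p0dot} identifies $\mathscr{P}_{\G,\odot}$ with the base change of the Grothendieck--Springer resolution $\pi_1\colon \widetilde{\G}\to\G$ along $m\colon (b_1,b_2)\mapsto b_1b_2^{-1}$. The paper then exploits that $m$ is a \emph{trivial} $\H$-bundle onto the open cell $\G_0=\B^+\B^-$: the isomorphisms $\B^+\times\B^-\cong\G_0\times\H$ and $\mathscr{P}_{\G,\odot}\cong\widetilde{\G}_0\times\H$ (with $W$ acting trivially on the $\H$-factor) reduce the Lemma to the restriction of Proposition \ref{qncfdo} to $\G_0$, so the invariants are computed by rerunning the Chevalley-restriction and codimension-$2$ argument over $\G_0$ and tensoring with $\mathcal{O}(\H)$. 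You instead keep the base $X=\B^+\times\B^-$ and compute the invariants by properness of $\pi$, Stein factorization, and the fact that $\pi$ is generically a $W$-Galois cover, so that $\mathcal{O}(\mathscr{P}_{\G,\odot})^W$ is integral over $\mathcal{O}(X)$ and lies in $\mathrm{Frac}(\mathcal{O}(X))$, hence equals $\mathcal{O}(X)$ by normality of $X$. This is a genuinely different (and arguably more robust) route: it bypasses the explicit trivialization $X\cong\G_0\times\H$ and the Chevalley restriction theorem entirely, and would apply verbatim with any smooth affine base mapping dominantly to $\G$; indeed the same Galois-plus-integral-closure argument would also give the surjectivity step of Proposition \ref{qncfdo} itself. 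What the paper's route buys is brevity and a direct reuse of Proposition \ref{qncfdo}. Two small points to tighten in your write-up: the regularity of the $W$-action is cleanest if you observe that $\mathcal{O}(\mathscr{P}_{\bf a})$ \emph{is} the integral closure of $\mathcal{O}(X)$ in $\mathbb{C}(\mathscr{P}_{\G,\odot})$, which is manifestly stable under the birational $W$-action since $W$ fixes $\pi^*\mathcal{O}(X)$ --- this makes your detour through $\zeta$ and the codimension-$2$ locus unnecessary for both claims; and the identification of the image of $\zeta$ with $\mathscr{P}_{\bf a}$ should be stated as ``$\mathscr{P}_{\bf a}$ is the normalization of the closure of the image of $\zeta$'' rather than an equality.
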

\begin{proof}
Take the open cell $\G_0:= \B^+ \B^-$ inside $\G$. Let $\widetilde{\G}_0=\pi_1^{-1}(\G_0)$ be its preimage inside the Grothendieck-Springer resolution $\widetilde{\G}$.
We restrict the action of the Weyl group to $\widetilde{\G}_0$.
Going along the same lines as the proof of Proposition \ref{qncfdo}, we get an isomorphism
\[
\mathcal{O}(\G_0) \stackrel{\sim}{\longrightarrow} \mathcal{O}(\widetilde{\G}_0)^W.
\]
There is an isomorphism from $\B^+\times \B^-$ to $\G_0\times {\rm H}$ that maps $(b_1, b_2)$ to $(b_1b_2^{-1}, \pi_{\B_+}(b_1))$. Similarly, there is an isomorphism from $\mathscr{P}_{\G, \odot}$ to $\widetilde{\G}_0\times {\rm H}$.
Putting them together, we obtain the following commutative diagram:
\[\begin{tikzcd}
\mathscr{P}_{\G, \odot} \arrow{r}{\sim} \arrow[swap]{d}{\pi} & \widetilde{\G}_0\times {\rm H} \arrow{d}{\pi_1 \times {id}} \\
\B^+\times \B^-\arrow{r}{\sim}& \G_0 \times {\rm H}.
\end{tikzcd}
\]
The Weyl group action on $\mathscr{P}_{\G, \odot}$ becomes the Weyl group action on $\widetilde{\G}_0$ plus the trivial action on ${\rm H}$. Therefore,
\[
\pi^*: ~\mathcal{O}(\B^+\times \B^-) \stackrel{\sim}{=} \mathcal{O}(\G_0) \otimes \mathcal{O}({\rm H}) \stackrel{\sim}{\lra}  \mathcal{O}(\widetilde{\G}_0)^W \otimes \mathcal{O}({\rm H}) \stackrel{\sim}{=} \mathcal{O}(\mathscr{P}_{\G, \odot})^W. \qedhere
\]
\end{proof}

Define the {\it outer monodromy map} 
\be
\label{qfnonwrqp}
\tau: \B^+\times \B^- \lra {\rm H}, \hskip 7mm (b_1, b_2) \lms \pi_{\B^+}(b_1)\pi_{\B^-}(b_2).
\ee
Following \eqref{cwnionwv}, the dual Poisson-Lie group $\G^\ast$ is a subgroup of $\B^+\times \B^-$, given by the fiber over $e$ of the outer monodromy map $\tau$. 
Let $\alpha_1, \ldots, \alpha_r$ be simple positive roots of $\G$. The {\it outer monodromy} functions are 
\[
\mu_i: ~ \mathscr{P}_{\G, \odot}\stackrel{\pi}{\lra}\B^+\times \B^-\stackrel{\tau}{\lra}{\rm H}\stackrel{\alpha_i}{\lra} \mathbb{C}.
\]
Let $\mathcal{I}$ be an ideal of $\mathcal{O}( \mathscr{P}_{\G, \odot})^W$ generated by $\mu_i-1$ for $1\leq i\leq r$. Then we get an isomorphism
\be
\label{sfivqn}
\iota:\, \mathcal{O}(\mathscr{P}_{\G, \odot})^W {\Big \slash} \mathcal{I} \stackrel{\sim}{\lra} \mathcal{O}(\G^\ast).
\ee

\subsection{Braid group action and  Proof of Theorem \ref{main.result.1}}
\label{braid.group.action.4}
Let $C=(C_{ij})$ be the Cartan matrix associated to $\G$. For any $i\neq j \in I$, we set $m_{ij}=2,3,4$ or $6$ according to whether $C_{ij}C_{ji}$ is $0, 1, 2$ or $3$. The braid group $\mathbb{B}_\G$ is generated by $\sigma_i$ ($i \in I$), and satisfies the relations
\[
\sigma_i\sigma_j\sigma_i\ldots = \sigma_j \sigma_i\sigma_j \ldots,
\]
with both sides having $m_{ij}$ factors.

The paper \cite{GS3} presents a geometric braid group action on $\mathscr{P}_{\G, \odot}$ and proves its cluster nature. 
Under the projection \eqref{adondndwvn}, the braid group action on $\mathscr{P}_{\G, \odot}$ descends to a braid group action on $\B^+\times \B^-$ and further on $\G^\ast$ by restriction. 
By Theorem 2.29(4) of \cite{GS3}, when $\G$ is simply laced, the cluster quantization of this braid group action on $\G^\ast$ recovers  Lusztig's braid group actions on the quantum group $\mathcal{U}_q(\mathfrak{g})$.   

Below we present an explicit description of the induced braid group action on $\B^+\times \B^-$.

Let $\U^{+}=[\B^+, \B^+]$ and $\U^-=[\B^-, \B^-]$ be the maximal unipotent subgroups inside $\B^+$ and $\B^-$ respectively. 
Every $g\in \U^+{\rm H}\U^-$ admits a unique decomposition $g=[g]_+[g]_0[g]_-$ such that $[g]_+\in \U^+$, $[g]_0\in {\rm H}$, and $[g]_-\in {\rm U}^-$.
The standard pinning $p=(\B^+, \B^-, x_i, y_i; i\in I)$ determines a homomorphism $\gamma_i$ from ${\rm SL}_2$ to $\G$ for  $i\in I$. 
Correspondingly, there are additive characters $\chi_i: \U^+\rightarrow \C$ and $\chi_i^-: \U^-\rightarrow \C$ such that
\be
\label{sdoowdcvbo}
\chi_i(x_j(a))=\delta_{ij} a, \hskip 7mm \chi_i^-(y_j(b))=\delta_{ij}b,
\ee
where $\delta_{ij}$ is the Kronecker delta.

For $i\in I$, we get a Poisson automorphism 
\begin{align*}
\sigma_i: \B^+\times \B^- &\lra \B^+\times \B^-\\
(b_1, b_2) &\longmapsto (t_1 b_1t_2, t_1b_2t_2).
\end{align*} where
\[
t_1= \gamma_i \begin{pmatrix} 0 & ~1\\
-1 & ~\chi_i([b_1]_+)\\\end{pmatrix} ,  \hskip 10mm t_2= \gamma_i \begin{pmatrix} 0 & -1\\
1 & ~\chi_i^-([b_2]_-)\\\end{pmatrix}.
\]
The maps $\sigma_i$ satisfy the braid relation and therefore gives rise to a $\mathbb{B}_\G$-action on $\B^+\times \B^-$. 
The outer monodromy map $\tau$ in \eqref{qfnonwrqp} intertwines the braid group action on $\B^+\times \B^-$ with the Weyl group action on ${\rm H}$. Therefore it induces a $\mathbb{B}_\G$-action on the fiber $\G^\ast:= \tau^{-1}(e)$.

\begin{example} 
\label{adjoncaoda}
The group ${\rm PGL}_3^\ast$ consists of elements $(u_1h, h^{-1}u_2) \in \B_+\times \B_-$, where
\[
u_1=\begin{pmatrix} 1 & ~e_1 & ~e_3\\
0 & 1 & e_2\\
0 & 0 & 1\\\end{pmatrix} ,  \hskip 10mm 
h = \begin{pmatrix} \kappa_1\kappa_2 & ~0 & ~0\\
0 & \kappa_2 & 0\\
0 & 0 & 1 \\\end{pmatrix} ,
 \hskip 10mm 
u_2 = \begin{pmatrix} 1 & ~0 & ~0\\
f_1 & 1 & 0\\
f_3 & f_2 & 1\\\end{pmatrix}.
\]
The map $\sigma_1$ takes the element $(u_1h, h^{-1}u_2)$  to $(u_1'h', h'^{-1} u_2')$, where
\[
u_1'=\begin{pmatrix} 1 & ~\kappa_1^{-1}f_1 & ~e_2\\
0 & 1 & e_1e_2-e_3\\
0 & 0 & 1\\\end{pmatrix} ,  \hskip 10mm 
h' = \begin{pmatrix} \kappa_2 & ~0 & ~0\\
0 & \kappa_1\kappa_2 & 0\\
0 & 0 & 1\\\end{pmatrix} ,
 \hskip 10mm 
u_2' = \begin{pmatrix} 1 & ~0 & ~0\\
e_1\kappa_1 & 1 & 0\\
f_2 & f_1f_2-f_3 & 1\\\end{pmatrix}.
\]
The map $\sigma_2$ takes the element $(u_1h, h^{-1}u_2)$  to $(u_1''h'', h''^{-1} u_2'')$, where
\[
u_1''=\begin{pmatrix} 1 & ~e_3 & ~e_3\kappa_2^{-1}f_2 -e_1\\
0 & 1 & \kappa_2^{-1}f_2\\
0 & 0 & 1\\\end{pmatrix} ,  \hskip 10mm 
h'' = \begin{pmatrix} \kappa_1 & ~0 & ~0\\
0 & \kappa_2^{-1} & 0\\
0 & 0 & 1\\\end{pmatrix} ,
 \hskip 10mm 
u_2'' = \begin{pmatrix} 1 & ~0 & ~0\\
f_3 & 1 & 0\\
e_2\kappa_2f_3-f_1 & e_2\kappa_2 & 1\\\end{pmatrix}.
\]
\end{example}

Now we prove Theorem \ref{main.result.1}. Let $\mathcal{O}_q(\mathscr{P}_{\G, \odot})$ be the quantized algebra as in \eqref{evfoqscsvnj}. The Weyl group action are cluster transformations and therefore can be lift to an action on $\mathcal{O}_q(\mathscr{P}_{\G, \odot})$. The functions $\mu_i$ belong to the Poisson center of $\mathcal{O}(\mathscr{P}_{\G, \odot})$ and are expressed as  Laurent monomials in every seed of $\mathcal{O}(\mathscr{P}_{\G, \odot})$. Therefore, by the quantum Lift Theorem in \cite[Appendix.A3]{GS3}, the functions $\mu_i$ can be uniquely lift to the central elements in $\mathcal{O}_q(\mathscr{P}_{\G, \odot})^W$. We still denote by $\mathcal{I}$ the ideal of $\mathcal{O}_q( \mathscr{P}_{\G, \odot})^W$ generated by $\mu_i-1$.

Theorem 2.22 of \cite{GS3} shows that there is a canonical map of $\ast$-algebra:
\be
\label{anxsicnaidhbas}
\kappa: \mathcal{U}_q(\mathfrak{g}) \longrightarrow \mathcal{O}_q(\mathscr{P}_{\G,\bS})^W{\Big \slash} \mathcal{I}.
\ee
Taking the semiclassical limit of $\kappa$, we get a Poisson-algebra homomorphism 
\be
\label{anxsicnaidhb}
\kappa: \mathcal{O}(\G^\ast) \longrightarrow \mathcal{O}_1(\mathscr{P}_{\G, \odot})^W{\Big \slash} \mathcal{I} \stackrel{(\ast)}{\longrightarrow} \mathcal{O}(\mathscr{P}_{\G, \odot})^W {\Big \slash} \mathcal{I},
\ee
where the second map $(\ast)$ is a special case of  the homomorphism \eqref{sacfdouncdfoq}. 
By the following Lemma,  the map \eqref{anxsicnaidhb} is an isomorphism, which concludes the proof of Theorem \ref{main.result.1}.
\begin{lemma} The map $\kappa$ in \eqref{anxsicnaidhb} is the inverse map of the isomorphism $\iota$ in \eqref{sfivqn}.
\end{lemma}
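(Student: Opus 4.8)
The plan is to verify that the two maps $\iota$ and $\kappa$ are mutually inverse by checking the composite on generators. We already have the explicit isomorphism $\iota$ from \eqref{sfivqn}, built out of the geometric identifications $\mathscr{P}_{\G, \odot} \stackrel{\sim}{=} \widetilde{\G}_0 \times {\rm H}$ and the Chevalley restriction theorem; and we have the semiclassical homomorphism $\kappa$ from \eqref{anxsicnaidhb}, obtained from the $\ast$-algebra map $\mathcal{U}_q(\mathfrak{g}) \to \mathcal{O}_q(\mathscr{P}_{\G,\odot})^W/\mathcal{I}$ of \cite[Theorem 2.22]{GS3} by setting $q=1$ and composing with the comparison map \eqref{sacfdouncdfoq}. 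Since $\iota$ is already known to be an isomorphism, it suffices to show that $\kappa \circ \iota = \mathrm{id}$ (or equivalently $\iota \circ \kappa = \mathrm{id}$) on a generating set.

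First I would fix the description of $\mathcal{O}(\G^\ast)$ coming from the realization of $\G^\ast$ as a subgroup of $\B^+ \times \B^-$, so that $\mathcal{O}(\G^\ast)$ is generated by the matrix coefficients of $b_1 \in \B^+$ and $b_2 \in \B^-$ subject to the relation $\tau(b_1,b_2)=e$. Via the projection $\pi$ of \eqref{adondndwvn} and the identification $\mathcal{O}(\B^+\times\B^-) \stackrel{\sim}{=} \mathcal{O}(\mathscr{P}_{\G,\odot})^W$, these generators are pulled back to explicit $W$-invariant regular functions on $\mathscr{P}_{\G,\odot}$; quotienting by $\mathcal{I}$ imposes exactly $\mu_i = 1$, i.e. the condition cutting out the fiber $\tau^{-1}(e) = \G^\ast$. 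This is the map $\iota$. Next I would trace the semiclassical limit of the generators $\bE_i, \bF_i, \bK_i^{\pm 1}$ of $\mathcal{U}_q(\mathfrak{g})$ through the embedding $\kappa$ of \cite[Theorem 2.22]{GS3}, recording their images as the corresponding coordinate functions $e_i, f_i, \kappa_i^{\pm1}$ on $\mathscr{P}_{\G,\odot}$. The concrete $\mathfrak{sl}_2$ and $\mathrm{PGL}_3$ computations of Example \ref{scvdnfqpvnwqn} and Example \ref{adjoncaoda} give the pattern of this dictionary in rank one and two.

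The core of the argument is then a bookkeeping check: applying $\iota$ to the image $\kappa(\bE_i), \kappa(\bF_i), \kappa(\bK_i)$ returns the original generators $\bE_i, \bF_i, \bK_i$ of $\mathcal{O}(\G^\ast)$. Because both sides respect the Poisson structure and the $W$-action, it is enough to match the distinguished cluster functions that pull back the matrix coefficients of $b_1, b_2$ against the semiclassical PBW generators, using the fact that the map \eqref{sacfdouncdfoq} is an injective Poisson homomorphism and that the $\mu_i$ become central Laurent monomials that are sent to $1$ in both quotients. The fact that both $\iota$ and $\kappa$ are algebra maps means this need only be verified on generators, whereupon a map agreeing with the identity on generators of $\mathcal{O}(\G^\ast)$ is the identity.

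The main obstacle I expect is the compatibility of the two a priori unrelated descriptions of the generators: the \emph{geometric} side, where $\iota$ arises from the Grothendieck-Springer picture and the Chevalley isomorphism, versus the \emph{cluster-algebraic} side, where $\kappa$ arises from Fock-Goncharov quantization and the quantum Lift Theorem of \cite[Appendix A.3]{GS3}. Making the identification of $\kappa(\bE_i), \kappa(\bF_i), \kappa(\bK_i)$ with explicit regular functions precise requires carefully reading off the cluster realization of $\mathcal{U}_q(\mathfrak{g})$ in a chosen seed and taking $q \to 1$; once this dictionary is in hand, the verification $\kappa\circ\iota = \mathrm{id}$ is essentially formal. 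In particular, the injectivity half of \eqref{sacfdouncdfoq} guarantees that no information is lost in passing to the semiclassical limit, so that checking the identity on generators suffices to conclude that $\kappa = \iota^{-1}$.
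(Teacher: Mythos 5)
Your overall strategy --- show that $\iota\circ\kappa$ is an algebra endomorphism of $\mathcal{O}(\G^\ast)$ fixing a distinguished collection of elements and conclude that it is the identity --- is the same as the paper's, and the first half of your plan (identifying the semiclassical limits of $\bE_i,\bF_i,\bK_i$ with $\chi_i(u_1)$, $\chi_i^-(u_2)$, $\alpha_i(h)$ via \cite{CKP}, and computing $\kappa$ on these generators via \cite[Theorem 2.22]{GS3}) is exactly what the paper does. The gap is in the step ``a map agreeing with the identity on generators of $\mathcal{O}(\G^\ast)$ is the identity.'' The elements you actually verify to be fixed are only the semiclassical images of the \emph{Chevalley} generators, and these do \emph{not} generate $\mathcal{O}(\G^\ast)$ as a commutative algebra: for ${\rm PGL}_3$ (Example \ref{adjoncaoda}) they give $e_1,e_2,f_1,f_2,\kappa_i^{\pm1}$ but not the independent coordinates $e_3,f_3$ attached to the non-simple root. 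The higher root vectors in $\mathcal{U}_q(\mathfrak{g})$ are (rescaled) $q$-commutators of the $\bE_i$, and their semiclassical limits involve Poisson brackets rather than products, so they lie outside the commutative subalgebra generated by the $e_i$. Your appeal to ``the Poisson structure and the $W$-action'' to extend the check to all matrix coefficients does not close this: the $W$-action lives upstairs on $\mathscr{P}_{\G,\odot}$ and acts trivially through the quotient, and \cite[Theorem 2.22]{GS3} does not give you $\kappa$ on arbitrary matrix coefficients, so there is nothing concrete to ``bookkeep'' for the higher root coordinates.

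The missing idea, which is the actual content of the paper's proof, is the braid group: by \cite[\S 7.5]{CKP} the geometric $\mathbb{B}_\G$-action on $\G^\ast$ described in Section \ref{braid.group.action.4} is the semiclassical limit of Lusztig's braid action on $\mathcal{U}_q(\mathfrak{g})$, so $\iota\circ\kappa$ is $\mathbb{B}_\G$-equivariant and therefore fixes the full braid orbits of the Chevalley images (e.g.\ $\sigma_1$ produces $e_1e_2-e_3$ from $e_2$ in the ${\rm PGL}_3$ example, recovering $e_3$); these orbits do generate $\mathcal{O}(\G^\ast)$ as an algebra. An alternative consistent with your outline would be to prove that $\chi_i(u_1),\chi_i^-(u_2),\alpha_i(h)$ generate $\mathcal{O}(\G^\ast)$ as a \emph{Poisson} algebra and then use that $\iota\circ\kappa$ is a Poisson map; that would work, but it is an additional claim requiring proof, and as written your proposal asserts generation at the level of commutative algebras, which is false.
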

\begin{proof} We shall adopt the notations used in \cite{GS3}. Following  \cite[\S 11.1]{GS3}, the quantum group $\mathcal{U}_q(\mathfrak{g})$ is generated by the rescaled generators $\{{\bf E}_i, {\bf F}_i, {\bf K}_i^{\pm 1}\}_{i\in I}$ satisfying relations (398)-(399) therein. 
By Theorem 7.6 of \cite{CKP}, after taking the semiclassical limit, the above generators become the following regular functions in $\mathcal{O}(\G^\ast)$:
\[
{\bf E}_i \lms \chi_i(u_1), \hskip 10mm {\bf F}_i \lms \chi_i^-(u_2), \hskip 10mm {\bf K}_i \lms \alpha_i(h), \hskip 10mm \forall \, (u_1h, h^{-1}u_2) \in \G^\ast.
\]
where  $\chi_i$ and $\chi_i^-$ are as in \eqref{sdoowdcvbo}, and $\alpha_i$ is the $i$th simple positive root. 
By Theorem 2.22 of \cite{GS3}, under the map \eqref{anxsicnaidhbas}, we get the following quantized functions in $\mathcal{O}_q(\mathscr{P}_{\G,\bS})^W{\Big \slash} \mathcal{I}$.
\[
\kappa({\bf E}_i)={\rm W}_{e, i}, \hskip 10mm \kappa({\bf F}_i)={\rm W}_{f,i^\ast}, \hskip 7mm \kappa({\bf K}_i) =\rho^\ast_e \alpha_i. 
\]
By definition, the semiclassical limits of ${\rm W}_{e,i}$, ${\rm W}_{f, i^\ast}$ and $\rho^\ast_e \alpha_i $ coincide with the functions  
$\chi_i(u_1)$, $\chi_i^-(u_2)$ and $\alpha_i(h)$ respectively under the isomorphism $\iota$ in \eqref{sfivqn}. In other words, the composition $\iota \circ \kappa$ keeps the functions $\chi_i(u_1)$, $\chi_i^-(u_2)$ and $\alpha_i(h)$ invariant. 

As shown above, the braid group action on $\mathscr{P}_{\G, \odot}$ induces a braid group action on $\G^\ast$. 
Following \cite[\S 7.5]{CKP}, the induced braid group action on $\G^\ast$ is the semiclassical limit of Lusztig's braid group action on $\mathcal{U}_q(\mathfrak{g})$. As a consequence, the maps $\iota$ and $\kappa$ are equivarient under the braid group actions on both algebras. Let $\sigma$ be a braid group element. If $f\in \mathcal{O}(\G^\ast)$ is invariant under the composition $\iota\circ \kappa$, then we have
\[
\iota \circ \kappa (\sigma^*f) = \sigma^* ( \iota \circ \kappa (f)) = \sigma^* f.
\]
Therefore, the functions $\chi_i(u_1)$, $\chi_i^-(u_2)$ and $\alpha_i(h)$ and their images under Braid group actions are invariant under $\iota\circ \kappa$. Note that all these functions generate $\mathcal{O}(\G^*)$ as an algebra. Therefore $\iota\circ \kappa$ is the identical map. 
\end{proof}

\subsection{Proof of Theorem \ref{main.result.2}}

Gross, Hacking, Keel, and Kontsevich constructed a formal basis (called $\Theta$-basis) for cluster Poisson algebras. If the underlying quiver admits a {\it maximal green sequence} (or more generally a {\it reddening sequence}),  then the $\Theta$-basis is an actual linear basis for the cluster Poisson algebra (\cite[Prop.0.7]{GHKK}).

Now let us focus on the quantum cluster algebra $\mathcal{O}(\mathscr{P}_{\G, \odot})$. When $\G={\rm PGL}_n$, the existence of reddening sequences of $\mathcal{O}(\mathscr{P}_{\G, \odot})$ has been proved in \cite{GS2}.
For general $\G$, let us first describe an underlying quiver for $\mathcal{O}(\mathscr{P}_{\G, \odot})$. We take an ideal triangulation of $\odot$ and use the puncture as the chosen vertex for both triangles as follows
\begin{center}
\begin{tikzpicture}[scale=0.7]
\draw  (0,0) circle (20mm);
\node [blue, thick]  at (0,0) {\Large $\circ$};
\node [red]  at (0,2) {\small $\bullet$};
\node [red]  at (0,-2) {\small $\bullet$};
\node at (-.5,0) {\bf i };
\node at (.5,0) {\bf i'};
\draw (0,2)--(0,.12);
\draw (0,-2) -- (0, -.1);
    \end{tikzpicture}
 \end{center}
 We further choose reduced decompositions ${\bf i}$ and ${\bf i}'$ of $w_0$ for these two triangles. Fix a bipartite coloring on the Dynkin diagram for $\G$. Let $b\in W$ be the product black simple reflections and $w\in W$ the product of white ones. Consider the Coxeter element $c=bw$. If $\G$ is not of type $A_{2n}$, then the Coxeter number $h$ is even, and we use the reduced decomposition $w_0=c^{h/2}$ for both ${\bf i}$ and ${\bf i}'$.
 If $\G$ is of type $A_{2n}$, then the Coxeter number $h$ is odd. In this case, we will use the following two reduced decompositions as ${\bf i}$ and ${\bf i}'$:
 \[
 w_0= \underbrace{bwb\dots }_\text{$h$ factors}=  \underbrace{wbw\dots }_\text{$h$ factors}. 
 \]
 Putting them together, we get $w_0^2= c^h$ as braid group elements. Denote by $Q_{\bf i, {\bf i}'}$ the quiver of $\mathcal{P}_{\G, \odot}$ associated to the above data. By the amalgamation procedure, the mutable part of $Q_{\bf i, {\bf i}'}$ coincides with the quiver $Q_h(\mathfrak{g})$ in \cite{IIHI}. By Theorem 2 of \cite{IIHI}, the quiver $Q_{\bf i, {\bf i}'}$ admits a maximal green sequence. 

 Combining the aforementioned results, we obtain a linear basis $\Theta$ of $\mathcal{O}(\mathscr{P}_{\G, \odot})$.
The basis $\Theta$ admits positive integer structure coefficients. The quasi cluster transformations of $\mathscr{P}_{\G, \odot}$, which in particular contains the Weyl group and the Braid group actions, preserve the basis $\Theta$ as a set.

The Weyl group $W$ acts on the basis $\Theta$. For every $W$-orbit $c\subset \Theta$, we define 
\begin{equation}
\label{avfvfew}
\theta_c:= \sum_{\theta_i \in c} \theta_i.
\end{equation}
The functions \eqref{avfvfew} form a basis for $\mathcal{O}(\mathscr{P}_{\G, \bS})^W$.

Recall that the outer monodromies $\mu_i$ are Poisson central elements of $\mathcal{O}(\mathscr{P}_{\G, \odot})$. For $a=(a_1, \ldots, a_r)\in \mathbb{Z}^n$, let us set $\mu^a=\prod_{i=1}^r \mu_i^{a_i}$. 
\begin{lemma} \label{sdnnOASC}
If $\theta\in \Theta$, then $\theta \cdot \mu ^a \in \Theta$.
\end{lemma}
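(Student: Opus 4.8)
The plan is to realize each outer monodromy, and hence each product $\mu^a$, as a non-scattering theta function, and then to show that multiplication by such an element merely translates the index of a theta function. Throughout I use the broken-line description of the theta basis $\Theta$ from \cite{GHKK}: each $\theta \in \Theta$ is indexed by a tropical point $g(\theta)$, in every seed it is a Laurent polynomial whose terms are read off from broken lines with initial exponent $g(\theta)$, and the product of two theta functions expands in $\Theta$ with structure constants given by counting pairs of broken lines that co-terminate at a common generic basepoint. These structure constants are non-negative integers, which is also the source of the positivity in Theorem \ref{main.result.2}.

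First I would show that $\mu^a$ is itself a theta function with trivial scattering. By hypothesis each $\mu_i$ is a Poisson-central element that is a Laurent monomial in every seed of $\mathcal{O}(\mathscr{P}_{\G, \odot})$; since the $\mu_i$ are units, the same holds for any product $\mu^a = \prod_i \mu_i^{a_i}$ with $a \in \mathbb{Z}^r$. A global function that is a Laurent monomial in every seed can carry no broken-line corrections: any bend of a broken line would contribute a genuinely non-monomial term in some cluster chart, contradicting monomiality. Hence the broken lines computing $\mu^a$ are all straight, unbent rays, and $\mu^a = \theta_{m_a}$ where $m_a = \sum_i a_i\, m_i$ is the common exponent of the monomial and $m_i = g(\mu_i)$.

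It then remains to evaluate $\theta \cdot \mu^a$ for an arbitrary $\theta \in \Theta$. In the structure-constant formula for $\theta \cdot \theta_{m_a}$ at a generic basepoint $Q$, the broken lines associated to the factor $\theta_{m_a}$ reduce to the single unbent ray of slope $m_a$ through $Q$, so each contributing pair is determined by a single broken line for $\theta$ together with this fixed ray. Translating the terminal exponent by $m_a$ then sets up a bijection between these pairs and the broken lines defining $\theta_{g(\theta)+m_a}$; consequently
\[
\theta \cdot \mu^a = \theta_{\,g(\theta)+m_a} \in \Theta .
\]
Equivalently, since $\mu^a$ is a central unit that is a monomial in every chart, multiplication by it is a compatible monomial rescaling of every seed carrying the distinguished term $x^{g(\theta)}$ of $\theta$ to $x^{g(\theta)+m_a}$ and each subleading term correspondingly, so the result is again pointed at a single tropical point and hence atomic.

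I expect the crux to be the claim in the second paragraph that \emph{global monomiality forces trivial scattering}, together with the translation bijection of broken-line data in the third: one must verify that adding the fixed exponent $m_a$ commutes with every wall-crossing automorphism encountered along a broken line. This is exactly where the centrality of the $\mu_i$ enters—their Poisson-commutation with all cluster variables means $m_a$ pairs trivially against the exchange form, so translation by $m_a$ does not alter the bending behavior of any broken line. This is also the step that should govern the promised quantum refinement.
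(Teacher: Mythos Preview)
Your argument is correct, but it takes a different and more technical route than the paper. You work directly with the broken-line description: from the fact that each $\mu_i$ is a Laurent monomial in every seed you extract that the exponent $m_a$ pairs trivially with every mutable direction, hence with every wall normal in the scattering diagram, so translation by $m_a$ commutes with all wall-crossings and carries broken lines for $\theta$ bijectively to broken lines for $\theta_{g(\theta)+m_a}$. This yields the sharper conclusion $\theta\cdot\mu^a=\theta_{g(\theta)+m_a}$, not merely membership in $\Theta$.

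The paper's proof avoids the scattering diagram entirely. It only uses that (i) both $\mu^a$ and $\mu^{-a}$ are global monomials and hence elements of $\Theta$, and (ii) the structure constants of $\Theta$ are non-negative integers. Expanding $\theta\cdot\mu^a=\sum_i c_i\,\theta_i$ and then multiplying back by $\mu^{-a}$ gives $\theta=\sum_{i,j} c_i c'_{ij}\,\theta_j$; positivity and linear independence force a single term on the right, so $\theta\cdot\mu^a$ is already a basis element. This is shorter and needs no geometry of broken lines, at the cost of not identifying which theta function one obtains. Your approach, by contrast, gives the explicit index shift and makes transparent why the $\mathbb{Z}^r$-action in the next paragraph of the paper is by translations, which is useful when one later wants to pass to the quotient by $\mathcal{I}$.

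One small point: your sentence ``any bend of a broken line would contribute a genuinely non-monomial term'' is not quite the right justification---a single bent line still contributes a monomial; what you really use, and say correctly at the end, is that $\langle m_a,n\rangle=0$ for every $n$ in the unfrozen lattice, so no wall can bend a line with slope $m_a$ and translation by $m_a$ leaves every bending factor unchanged. With that clarified, your argument goes through.
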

\begin{proof} By definition $\mu^a$ is a regular function of $\mathscr{P}_{\G, \odot}$. Meanwhile, $\mu^a$ are presented as Laurent monomials in every cluster chart of $\mathscr{P}_{\G, \odot}$. The functions $\mu_a$ are a special family of {\it global monomials} in terms of \cite[Def.0.1]{GHKK}. Therefore $\mu^a\in \Theta$. Let $\theta\in \Theta$. Then 
\be
\label{vcnfoi}
\theta \cdot \mu^a =\sum_{\theta_i} c(\theta,\mu^a;\theta_i) \theta_i,
\ee
where the structure coefficients $c(\theta,\mu^a;\theta_i)$ are non-negative integers. Meanwhile $\mu^{-a}\in \Theta$. Therefore
\be
\label{vcnfoi2}
\theta =\sum_{\theta_i} c(\theta,\mu^a;\theta_i) \theta_i \cdot \mu^{-a} =\sum_{\theta_i}\sum_{\theta_j}c(\theta,\mu^a;\theta_i)c(\theta_i, \mu^{-a}, \theta_j) \theta_j.
\ee
By comparing both sides of \eqref{vcnfoi2}, we see that there is only term on the right hand side of \eqref{vcnfoi}. Therefore $\theta \cdot \mu^a\in \Theta$.
\end{proof}
By Lemma \ref{sdnnOASC}, we obtain a $\mathbb{Z}^n$ action on $\Theta$. The functions $\mu_i$ are invariant under the $W$-action. Therefore the induced $\mathbb{Z}^n$ action commutes with the $W$-action, and descends to an $\mathbb{Z}^n$-action on the set of functions \eqref{avfvfew}. After quotient out the ideal $\mathcal{I}$, 
the functions $\theta_c$ in the same $\mathbb{Z}^n$ orbit descends to one function on $\mathcal{O}(\G^\ast)$. Putting them together, we obtain a basis $\overline{\Theta}$ for $\mathcal{O}(\G^\ast)$.

The basis $\Theta$ of $\mathcal{O}(\mathscr{P}_{\G, \odot})$ has positive integer structure coefficients and satisfies an invariance property with respect to the braid group action on $\mathcal{O}(\mathscr{P}_{\G, \odot})$. Following its construction, the basis $\overline{\Theta}$ of $\mathcal{O}(\G^\ast)$ inherits both properties from $\Theta$, which completes the proof of Theorem \ref{main.result.2}.

\vskip 2mm

We conjecture that Thereom \ref{main.result.2} can be generalized to the quantum setting. 
\begin{example} \label{acsdnjoqde}
We continue from Example \ref{scvdnfqpvnwqn}. Recall the Casimir element
\[
{\bf C}:={\bf E}{\bf F}-q{\bf K}^{-1} - q^{-1} {\bf K}= {\bf F}{\bf E}- q^{-1}{\bf K}^{-1} -q{\bf K}.
\]
The element ${\bf C}$ generates the center of $\mathcal{U}_q(\mathfrak{sl}_2)$.
Let $T_0, T_1, \ldots$ be the sequence of Chebyshev polynomials given by $T_0=1$ and $T_n(t+t^{-1})=t^n+t^{-n}$ for $n >0$. The set
\[
\Theta=\left\{q^{lm}{\bf E}^l{\bf K}^mT_n({\bf C})~\middle|~ l\geq 0, n\geq 0, m \in \mathbb{Z}\right\} \bigsqcup \left\{q^{ml}{\bf K}^l{\bf F}^m T_n({\bf C})~\middle|~ l\in \mathbb{Z}, m>0, n\geq 0 \right\}
\]
forms a linear basis of $\mathcal{U}_q(\mathfrak{sl}_2)$ with structure coefficients in $\mathbb{N}[q, q^{-1}]$.
\end{example}

\begin{remark} The above basis $\Theta$ for $\mathcal{U}_q(\mathfrak{sl}_2)$ is constructed using the lamination model of Fock and Goncharov in \cite[\S 12]{FGteich}. In this case, the basis $\Theta$ coincides with the double canonical bases of Berenstein and Greenstein \cite{BG}. The author is grateful to Yiqiang Li for bringing his attention to the paper \cite{BG}.
For higher rank groups, the actual calculation of theta bases in \cite{GHKK} requires counting an enormous amount of broken lines in large dimensional Scattering diagrams. To give an explicit construction of the basis for  $\mathcal{O}_q(\mathscr{P}_{\G, \bS})$ via concrete combinatorial models is an interesting direction for future research. See \cite{DS} and \cite{K} for recent results regarding the ${\rm SL}_3$ case.
\end{remark}

\end{document}